\documentclass[12 pt]{article}
 
 \usepackage{a4wide}
 \usepackage{natbib}
 
\usepackage{amsmath,amsfonts,amssymb,amsthm,booktabs,color,epsfig,graphicx,hyperref,url,lineno}
 
 \usepackage{booktabs}
 
\newtheorem{theorem}{Theorem}
\newtheorem{assumption}{Assumption}
\newtheorem{lemma}{Lemma}
\theoremstyle{remark}
\newtheorem{remark}{Remark}
\newtheorem{example}{Example}

\usepackage{ulem}

\usepackage{enumitem}

\newcommand{\Rr}{\ensuremath{\mathbb{R}}}

\newcommand{\R}{\mathbb{R}}
\newcommand{\Lcal}{\ensuremath{{\cal L}}}

\newcommand{\Ncal}{\ensuremath{{\cal N}}}

\renewcommand{\P}{\mathsf{P}}

\newcommand{\tv}{\mathbf{v}}

\newcommand{\ty}{\mathbf{y}}
\newcommand{\tz}{\mathbf{z}}
\newcommand{\txi}{\boldsymbol \xi}
\newcommand{\teps}{\bs{\eps}}
\newcommand{\teta}{\bs{\eta}}
\newcommand{\eps}{\varepsilon}
\newcommand{\var}{\mathrm{Var}\,}
\newcommand{\cov}{\mathrm{Cov}\,}
\newcommand{\E}{\mathrm{E}\,}

\newcommand{\I}{\mathrm{I}}

\newcommand{\tE}{\mathbb{E}}
\newcommand{\tD}{\mathbf{D}}

\newcommand{\tR}{\mathbf{R}}
\newcommand{\tX}{\boldsymbol{X}}
\newcommand{\tY}{\boldsymbol{Y}}
\newcommand{\tZ}{\boldsymbol{Z}}
\newcommand{\tB}{\mathbf{B}}
\newcommand{\tu}{\mathbf{u}}
\newcommand{\tc}{\mathbf{c}}
\newcommand{\tb}{\mathbf{b}}

\newcommand{\tP}{\boldsymbol{P}}

\newcommand{\tmu}{\boldsymbol{\mu}}
\newcommand{\Dto}{\stackrel{D}{\to}}
\newcommand{\Pto}{\stackrel{P}{\to}}

\newcommand{\ta}{\mathbf a}
\newcommand{\bh}{\widehat{\mathbf b}}
\newcommand{\ah}{\widehat{\mathbf a}}

\newcommand{\what}[1]{\widehat{#1}}

\renewcommand{\th}{(\ta_{\tau,\tu}^\top,\tc_{\tau,\tu}^\top)^\top}

\newcommand{\tct}{{\tc}_{\tau,\tu}}

\newcommand{\tU}{\mathbf{U}}
\newcommand{\tS}{\mathbf{S}}

\newcommand{\tr}{\mathbf{r}}

\newcommand{\td}{\mathbf{d}}

\newcommand{\tW}{\mathbf W}

\newcommand{\dd}{\,\mathrm{d}}

\newcommand{\Gu}{\mathbf{\Gamma}_{\tu}}

\newcommand{\tA}{\mathbf{A}}

\newcommand{\tOmega}{\mathbf{\Omega}}
\newcommand{\tC}{\mathbf{C}}
\newcommand{\tI}{\mathbf{I}}

\newcommand{\ttY}{\mathbb{Y}}
\newcommand{\ttX}{\mathbb{X}}
\newcommand{\ttI}{\mathbf{I}}
\newcommand{\ttH}{\mathbf{H}}
\newcommand{\ttM}{\mathbf{M}}
\newcommand{\ttG}{\mathbf{G}}

\newcommand{\ttA}{\mathbf{A}}
\newcommand{\ttB}{\mathbf{B}}

\newcommand{\fuX}{f_{\tu^\top\tY|\tX}}

\newcommand{\fuXG}{f_{\tu^\top\tY|\tX,\Gu^\top\tY}}

\newcommand{\tSigma}{\boldsymbol{\Sigma}}
\newcommand{\tx}{\boldsymbol{x}}
\newcommand{\tQ}{\mathbf{Q}}
\newcommand{\tV}{\mathbf{V}}

\newcommand{\bs}[1]{{\boldsymbol{#1}}}

\begin{document}

\begin{center}

\textbf{\Large Testing axial symmetry in multivariate location-scale \\[1ex] linear regression}
\vskip4ex

\v{S}\'{a}rka Hudecov\'{a}$^1$ and Miroslav \v{S}iman$^2$

 \end{center}

\vskip4ex

{\small
\noindent $^1$
Department of Probability and Statistics,
Faculty of Mathematics and Physics,  Charles University, Czech Republic\newline
Email address: \url{hudecova@karlin.mff.cuni.cz} \\[1ex]
$^2$ The Czech Academy of Sciences, Institute of Information Theory and Automation, Czech Republic
}

\paragraph{Abstract.}
The article deals with the problem of  testing conditional axial symmetry within a~multivariate linear  heteroscedastic regression framework. 
A new test based on integrated rank scores is introduced and its asymptotic distribution is derived. The proposed method extends a similar procedure developed for multivariate data to the regression setting. The test may also be employed to assess specific hypotheses concerning 
distributional properties of the error term. Its performance and application is illustrated in a small  simulation study and with real economic data. The article also contains a few theoretical results regarding axial symmetry that may be of independent interest.

\section{Introduction}

Many multivariate techniques assume some form of symmetry, and violations of the assumption can lead to invalid conclusions. Also the optimal choice and efficiency of statistical methods can be influenced by a symmetry of the studied data distribution. Furthermore, symmetry plays an essential role in nature. Therefore, testing hypotheses about various multivariate symmetries is an important area of study; see, for example, \citep{He95, Di99, HM12, De23, Ch24}.

Testing axial symmetry in multivariate spaces has recently emerged as an interesting research topic, as it can also be applied to testing certain hypotheses regarding independence, exchangeability, or goodness-of-fit, see \cite{Hu19}. Moreover, some economic theories propose symmetric or specific relationships between variables (e.g., balanced trade, equal utility in consumption) that can be  translated as a  symmetry of the underlying distribution with respect to a specific axis.
This problem is particularly appealing in the context of multivariate regression location-scale models.

Let $m\geq 2$ and $p\geq 1$ be integers, let $\tY$ be an $m$-dimensional real random vector of outcomes
and let $\tX$ be a $p$-dimensional real random vector of regressors. These vectors will be assumed to satisfy the following linear location-scale heteroscedastic regression model:
\begin{equation}\label{RegMod1}
\tY=\ttB\tX + (\td^\top\tX)\teps,
\end{equation}
where $\ttB$ is a real $m\times p$ constant matrix, $\td\in\R^p$ is a nonzero real constant vector 
such that $\td^\top \tX\ne 0$, and $\teps$ is an $m$-dimensional real random vector 
independent of $\tX$, with $\E \teps =\bs{0}$. 
As usual, the first component of $\tX$ is assumed to be $1$. The model reduces to the classic homoscedastic multiple linear regression if 
$\td=(1,0,\dots,0)^\top$. To ensure the indentifiability of model parameters, we further require the first component of $\td$ equal to~1, i.e., 
$\td = (1, d_2, \dots, d_p )^\top$.

If the variance matrix of $\teps$ is finite, then the first two conditional moments of $\tY$ from \eqref{RegMod1} given $\tX$ are $\E[\tY|\tX] = \ttB \tX$ and $\var[\tY|\tX] = (\td^\top\tX)^2 \var \teps$. Nevertheless, one may sometimes desire to have additional information about the whole conditional distribution $\Lcal(\tY|\tX)$. For example, some approaches to Value-at-Risk or to the construction of prediction sets assume that the conditional distribution is multivariate normal or elliptical \citep{La18}. Econometricians may want to test whether the conditional distribution is exchangeable \citep{Mc91}. In general, the assumption of conditional symmetry proved useful for statistical inference, see, e.g., \cite{HM12}, \cite{Ch22}, \cite{Ch24} and references given there.  In this article, we are interested in testing the axial symmetry around a line in a given direction. This concept can be used for testing various statistical hypotheses  \citep{Hu19, Hu20} and relates to other distributional properties and concepts of symmetry.

In the non-regression case, there are already quite a few nonparametric tests of the same or similar hypotheses for special axes and/or in the bivariate case when axial and halfspace symmetry coincide; see, e.g., \cite{Ho71}, \cite{Mo08}, \cite{Ra12}, \cite{Ri21} and references therein for some examples, though usually assuming a fixed axis of symmetry rather than its direction. Unfortunately, the nonparametric tests of axial symmetry usable in spaces beyond dimension two and working with all possible axial directions are only recent and few; see \cite{Ka19}, \cite{Hu19, Hu20, Hu21}, \cite{Si24}, perhaps also \cite{Hu25}. And only those of \cite{Hu19} and \cite{Si24} can be used in the linear regression setup where their performance is far from satisfactory. As even the best tests of \cite{Si24} are recommended only for very large datasets of small-dimensional observations when other options are not available, the only real competitor to the test presented here is described in \cite{Hu19}.   
This article generalizes the test of \cite{Hu21} to the linear regression setup. The resulting test inherits many desirable properties from the purely multivariate case,   
as power behavior, simplicity, and flexibility. 

We would also like to mention for the sake of completeness that the literature also describes a few tests of somewhat similar hypotheses, for example the tests regarding rotational symmetry of directional data (\cite{Pa20c}, \cite{Pa20d}, \cite{Cu20}, \cite{Wa21}, \cite{Ve22}), the tests of conditional central symmetry in case of zero trend (\cite{Su06}, \cite{Ch22}, \cite{Ch24}), the tests for the first eigenvector of a scatter matrix (\cite{Ha10b}, \cite{Pa20a}, \cite{Pa20b}, \cite{Pa23}) or the tests for eigenspaces of covariance matrices \citep{Ty81,Si23}.
   
Section \ref{DN} formally introduces the concept of axial symmetry. The proposed test statistic is explained in Section \ref{TH}, while its asymptotic distribution is provided in Section~\ref{AS}. Section \ref{Demo} confirms the theoretical results with a small simulation study, and Section \ref{Real} illustrates the proposed methodology with an application to real economic data.  Concluding comments are  collected in Section \ref{CR}. 
Some auxiliary results about axial symmetry and its relation to the other concepts of distributional symmetry are provided in \ref{ap1}. 
All proofs are postponed to \ref{ap1B}, and all figures complementing the practical part of the paper are collected in \ref{ap2}.

\section{Axial symmetry in the regression setup}
\label{DN}

Let $\tu\in\R^m$ be a known fixed vector such that  $\|\tu\|=1$.  Let $\Gu$ be any matrix such that the matrix $\tQ=(\tu|\Gu)$ is orthogonal, i.e., $\tQ\tQ^\top = \tQ^\top\tQ = \tI$, where $\tI$ is the identity matrix. We also write $\boldsymbol{1}$ for the vector of all 1's, $\I[\cdot]$ for the indicator function, and $\Lcal(\tZ)$ for the distribution of a~random vector $\tZ$.

An $m$-dimensional stochastic vector $\tZ$ with finite expectation is said to be axially symmetric around an axis with direction specified by the unit vector $\tu \in \R^m$ if the distribution of $\tZ - \E\tZ$ is the same as the distribution of $\tR_{\tu}(\tZ-\E\tZ)$, where  $\tR_\tu = 2\tu\tu^\top - \tI$ is an orthogonal reflection matrix such that $\tR_\tu \tu = \tu$ and $\tR_\tu \tv = -\tv$ for all vectors $\tv\in\R^m$ orthogonal to $\tu$. 
The axial symmetry of a distribution with infinite expectation could be defined by replacing $\E\tZ$ with any point lying on the axis of symmetry in direction $\tu$. 
In particular, $\tZ$ with density $f$ is axially symmetric around the axis in direction $\tu$ and passing through the origin if and only if 
$f(\tz)=f(\tR_\tu \tz)$ for all $\tz\in\R^m$. 

According to Lemma \ref{lemEqiv1} in \ref{ap1}, $\tZ$ is axially symmetric around an axis with direction $\tu$, if and only if $(\tu|\Gu)^\top(\tZ-\E\tZ)$ has the same distribution as $(\tu|-\Gu)^\top (\tZ-\E\tZ)$. \ref{ap1} also contains other lemmas elucidating axial symmetry and its connections to the  eigenvectors of the scatter matrix and other concepts of symmetry.

For an outcome  $m$-dimensional  random vector $\tY$ and a $p$-dimensional regressor vector $\tX$, we are interested in testing the  null hypothesis that the conditional distribution of $\tY$ given $\tX$ is symmetric around  a~line in direction
 $\tu$,
\begin{equation*}
H_0(\tu): \ \Lcal(\tY|\tX) \text{ is axially symmetric around a line in direction } \tu. 
\end{equation*}
If  vector $(\tY^\top,\tX^\top)^\top$ satisfies model \eqref{RegMod1},  then the conditional distribution of  $\tY$ given $\tX$ is obtained as a special affine  transformation of the distribution of $\teps$. The null hypothesis $H_0(\tu)$ then holds if and only if the distribution of $\teps$ is axially symmetric around a line in direction 
$\tu$. Mathematically,  
\[
H_0(\tu): \ \Lcal(\teps) = \Lcal(\tR_\tu \teps).
\]

\medskip

Let $(\tY_i^\top,\tX_i^\top)^\top$, $i=1,\dots,n$, be a random sample of $n$ independent random vectors from \eqref{RegMod1}. 
Write $\ttY=(\tY_1,\dots,\tY_n)^\top$ and $\ttX = (\tX_1,\dots,\tX_n)^\top$ for the $n\times m$ and $n\times p$ data matrices. Then the corresponding regression model can be written as
\begin{equation}\label{RegMod2}
\ttY = \ttX \tB^\top+\tD \tE,
\end{equation}
where $\tE=(\teps_1,\dots,\teps_n)^\top$ and $\tD=\mathrm{diag}\{\td^\top\tX_1,\dots,\td^\top \tX_n\}$. 
If the random errors $\teps_1,\dots,\teps_n$ were observed, then $H_0(\tu)$ could be tested via the tools already available for testing the axial symmetry of multivariate distributions.
However, as the errors are not observed,  it is necessary to construct the test for $H_0(\tu)$ only by means of the data matrices $\ttY$ and $\ttX$.

\section{Testing by means of integrated rank scores}\label{TH}

There exists  a useful connection between $H_0(\tu)$ and the directional quantile regression introduced in the article \cite{Ha10a}. 
There, for a  quantile level $\tau\in(0,1)$, the $(\tau,\tu)$-dependent directional regression quantile coefficient vector ${\th}$ is defined as the solution to the minimization problem
\begin{equation}\label{optim}
\min_{(\ta^\top,\tc^\top)^\top\in \Rr^{m+p-1}} \E \rho_\tau \Bigl(\tu^\top\tY-\ta^\top\tX-\tc^\top\Gu^\top\tY\Bigr),
\end{equation}
where $\rho_\tau (t)= t (\tau - \I[t<0]) = \max\{(\tau-1)t,\tau t\}$ is the quantile check function. The null hypothesis $H_0(\tu)$ implies that $\tct=\bs{0}$ for all $\tau\in(0,1)$, see Lemma~\ref{lemHS} in  \ref{ap1B}, and the reverse is true for
elliptically distributed $\teps$, see \cite{Hu19}. Testing the hypothesis
\begin{equation*}
\widetilde{H}_0(\tu):\ \tct=\bs{0}\ \ \text{for all}\  \tau\in(0,1)
\end{equation*}
is closely related to the quantile regression problems considered in \cite{Gu92}, \cite{Gu93} and \cite{Ko99}. Unfortunately, the presence of stochastic and response-dependent regressors complicates the matter and the results contained in those articles cannot be applied directly. 
 
In the non-regression case, where $\ttX=\boldsymbol{1}$, \cite{Hu21} showed that it is beneficial to test axial symmetry by means of the integrated rank scores of \cite{Gu92}. The integrated rank score vector  $\bh=(\what{b}_{1},\dots,\what{b}_{n})^\top$, 
\begin{equation*}\label{bi}
\what{b}_{i}=-\int_{0}^1\phi(t) \dd \what{a}_{i}(t), \ i=1,\dots,n,
\end{equation*}
is defined for any integrable score function $\phi:[0,1]\to \R$ by means of the rank score vector $\ah(\tau) = \big(\what{a}_1(\tau),\dots,\what{a}_n(\tau)\big)^\top$,
\begin{equation}\label{eq:DQ}
\ah(\tau) = \mathrm{argmax}_{\ta\in[0,1]^n}\{\tu^\top\ttY^\top\ta :\ \ttX^\top {\ta}=(1-\tau)\ttX^\top\mathbf{1} \},\ \tau \in [0,1].
\end{equation}
In what follows, we assume that $\phi$ satisfies the following condition:

\begin{assumption}\label{as:phi}
Function $\phi:[0,1]\to \R$ is a non-decreasing real function constant outside $[\varepsilon,1-\varepsilon]$ for some $\varepsilon\in(0,\frac12)$ and non-constant on $[\varepsilon,1-\varepsilon]$.
\end{assumption}

If Assumption \ref{as:phi} holds, then $\int_0^1 \phi^k(t)\dd t<\infty$ for any integer $k>0$. Therefore, $\phi$ is square integrable with finite mean $\bar{\phi}:=\int_0^1 \phi(t)dt$,
and the integration by parts leads to 
\[
\what{b}_{i} = [-\phi(t) \what{a}_i(t)]_0^1 + \int_{0}^1 \what{a}_i(t)\dd\phi(t) =\phi(0) + \int_{0}^1 \what{a}_i(t)\dd\phi(t),
\] 
because $\what{a}_i(0)=1$ and $\what{a}_i(1)=0$.
Possible examples of $\phi(t)$ satisfying Assumption~\ref{as:phi}  include $\phi(t)=\mathrm{sign}(t-0.5)$ (leading to the sign scores), 
the trimmed Wilcoxon score function such that  $\phi(t)=(t-0.5)$ for $t\in[\eps,1-\eps]$ 
 (leading to the trimmed Wilcoxon scores), and $\phi$ equal to the trimmed quantile function of the standard normal distribution, i.e., $\phi(t) =\Phi^{-1}(t)$ for $t\in[\eps,1-\eps]$,  which leads to the trimmed normal (or, van der Waerden) scores. 
Note that all the three illustrative score functions lead to $\bar{\phi}=0$ and $\phi^2(t)=\phi^2(1-t)$, $t\in [0,1]$. The trimming outside $[\varepsilon, 1-\varepsilon]$ can always be done for any unbounded score function for a very small $\varepsilon$, and as such, it has no practical effect for a fixed finite  sample size.

Denote as $\ttM_{\ttX} = \ttX(\ttX^\top\ttX)^{-1}\ttX^\top$ the projection matrix onto the regression space $\ttX$. 
We generalize the statistic of \cite{Hu21} to the regression case and consider it in the form 
\begin{equation}\label{Sn}
\what{\tS}_n=\frac{1}{\sqrt{n}}\Gu^\top {\ttY}^\top(\ttI - \ttM_{\ttX})\bh.
\end{equation}
  It follows from the connection between $H_0$ and $\widetilde{H}_0$ that $\what{\tS}_n$ can be used for testing the null hypothesis of axial symmetry.

The computation of $\what{\tS}_n$ is easy because the integrated rank score vector $\bh$ can be obtained by means of the tools available for quantile regression with responses $\tu^\top \tY_i$ and regressors $\tX_i$, $i=1,\dots,n$. 
For example, this can be achieved  in R \citep{R} using the package \texttt{quantreg} \citep{Ko15} and functions \texttt{qr} and \texttt{ranks}.

\section{Asymptotics}\label{AS}

%

We derive the asymptotic results regarding $\what{\tS}_n$ under the following assumption:
 
\begin{assumption}\label{as:XY}
Let $(\tY^\top,\tX^\top)^\top$ satisfy model \eqref{RegMod1} with $\E \varepsilon=\bs{0}$ and $\tX=(1,\widetilde{\tX}^\top)^\top$. 
Let both $(\tY^\top,\widetilde{\tX}^\top)^\top$ and $\teps$ have a continuous distribution with a density that is bounded, continuous and positive in the interior 
of a connected support. Finally, let $\E\|\bs{Y}\|^{2+\delta}<\infty$ and $\E\|\bs{X}\|^{2+\delta}<\infty$ for some $\delta>0$.
\end{assumption}

For $i=1,\dots,n$, define 
\begin{align}\label{eq:WU}
W_i&= (\td^\top\tX_i) \tu^\top \teps_i = \tu^\top\tY_i - \tu^\top\ttB\tX_i, \ \ \text{and}\\
\tU_i&=(\td^\top\tX_i )\Gu^\top\teps_i = \Gu^\top\tY_i - \Gu^\top\ttB\tX_i.\notag
\end{align}

\begin{theorem} \label{prop:p1} 
Let $H_0(\tu)$ hold, Assumptions \ref{as:phi} and \ref{as:XY} be satisfied  and  $\td^\top \tX_1>0$. 
Then 
\begin{equation}
\what{\tS}_n \Dto \mathsf{N}(\mathbf{0},\tSigma) \ \text{as} \ n\to\infty,
\end{equation} 
where 
 \begin{equation}\label{varT2}
\tSigma = \E \left\{ \tU_1 \tU_1^\top  \left[ \phi\big(F_{ \tu^\top \teps}(\tu^\top \teps_1)\big)-\overline{\phi}\right]^2\right\},
\end{equation}
where $F_{ \tu^\top \teps}$ is the  cumulative distribution function of $\tu^\top \teps_1$.
\end{theorem}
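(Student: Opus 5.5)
The proof starts from the observation that $(\ttI-\ttM_{\ttX})\ttX=\bs{0}$. By \eqref{RegMod2} we have $\Gu^\top\ttY^\top(\ttI-\ttM_{\ttX}) = \tU^\top(\ttI-\ttM_{\ttX})$, where $\tU=(\tU_1,\dots,\tU_n)^\top$ collects the vectors from \eqref{eq:WU}. Hence
\[
\what{\tS}_n=\frac{1}{\sqrt n}\sum_{i=1}^n \tU_i\what b_i-\frac{1}{\sqrt n}\tU^\top\ttX(\ttX^\top\ttX)^{-1}\ttX^\top\bh .
\]
The rank scores $\ah(\tau)$ are regression invariant: they depend on $\ttY$ only through $\tu^\top\ttY^\top$, and adding $\ttX\bs\beta$ leaves them unchanged. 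They are therefore the rank scores of the heteroscedastic quantile regression of $W_i=(\td^\top\tX_i)\tu^\top\teps_i$ on $\tX_i$. Since $\td^\top\tX>0$ (this is where $\td^\top\tX_1>0$ enters), the $\tau$-quantile of $W_i$ given $\tX_i$ equals $(\td^\top\tX_i)F^{-1}_{\tu^\top\teps}(\tau)$. This is linear in $\tX_i$, so the regression quantile model is correctly specified for every $\tau$.

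The key step is an asymptotic representation of the integrated rank scores in the spirit of \cite{Gu92}:
\[
\frac{1}{\sqrt n}\sum_{i=1}^n \tc_i\,\what b_i=\frac{1}{\sqrt n}\sum_{i=1}^n (\tc_i-\hat{\tc}_i)\bigl[\phi(F_{\tu^\top\teps}(\tu^\top\teps_i))-\bar\phi\bigr]+o_P(1).
\]
This is to be applied with the "scores" $\tc_i=\tU_i$, where $\hat{\tc}_i$ is the projection of $\tc_i$ onto the (appropriately weighted) regressor space. Gutenbrunner--Jure\v{c}kov\'a derive such a representation for deterministic regressors that are independent of the errors. Here $\tU_i$ is random and shares the factor $\td^\top\tX_i$ with $W_i$, but $\Gu^\top\teps_i$ is not independent of $\tu^\top\teps_i$. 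So I would redo the argument directly. First, I would establish a uniform Bahadur representation of $\ah(\tau)$ over $\tau\in[\varepsilon,1-\varepsilon]$ for the heteroscedastic model. This uses the $2+\delta$ moments and the bounded positive density from Assumption \ref{as:XY}, via the standard convexity and chaining argument for regression quantiles. Second, I would integrate this representation against $\dd\phi$. Assumption \ref{as:phi} makes $\phi$ of bounded variation and localized away from $0$ and $1$, so the tail region of $\tau$ causes no trouble.

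The residual projection $(\ttI-\ttM_{\ttX})$ removes the terms linear in $\ttX$. What remains is
\[
\frac{1}{\sqrt n}\sum_i \tU_i\bigl[\phi(F(\tu^\top\teps_i))-\bar\phi\bigr]
\]
plus two error terms. The first comes from the estimation of the quantile coefficients and, after the $\ttM_{\ttX}$ correction, is of the form $\frac1n\sum_i \tU_i \tX_i^\top\cdot O_P(1)$ times a density-weighted factor. The second is the term $\frac{1}{\sqrt n}\tU^\top\ttX(\ttX^\top\ttX)^{-1}\ttX^\top\bh$.

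Here $H_0(\tu)$ is used. Since $\teps\stackrel{D}{=}\tR_\tu\teps$, the pair $(\tu^\top\teps,\Gu^\top\teps)$ has the same law as $(\tu^\top\teps,-\Gu^\top\teps)$. Hence
\[
\E[\Gu^\top\teps\mid\tu^\top\teps]=0 .
\]
Together with the independence of $\teps$ and $\tX$, this gives $\E[\tU_i g(\tX_i,\tu^\top\teps_i)]=0$ for every integrable $g$. By the law of large numbers, all the cross terms $\frac1n\sum_i \tU_i\tX_i^\top h(\tu^\top\teps_i)$ appearing in the remainders are therefore $o_P(1)$. Multiplied by $O_P(1)$ quantities, including $\sqrt n$-rate quantile estimators, such terms vanish. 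The leading sum consists of i.i.d.\ mean-zero vectors with finite second moments, because $\phi$ is bounded and $\E\|\tU\|^2<\infty$. The multivariate CLT then gives the limit $\mathsf N(\bs0,\tSigma)$ with $\tSigma$ as in \eqref{varT2}.

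The main obstacle is the uniform-in-$\tau$ linearization of $\ah(\tau)$ with random, heteroscedastic regressors. The remainder must be shown to be $o_P(1)$ after multiplication by $\tU_i$, which is correlated with the design. My first attempt would be to write
\[
\what a_i(\tau)=\I[W_i>\tX_i^\top\hat\beta(\tau)]+r_i(\tau),
\]
where the $r_i(\tau)$ are nonzero only at the at most $p$ basis observations. I would then control $\sup_\tau\frac{1}{\sqrt n}\bigl\|\sum_i\tU_i(\I[W_i>\tX_i^\top\hat\beta(\tau)]-\I[W_i>\tX_i^\top\beta(\tau)])\bigr\|$ with an empirical-process stochastic equicontinuity argument. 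The class of indicator functions indexed by $(\beta,\tau)$ is VC. The drift of this process is $\E[\tU\,f(\cdot)\tX^\top](\hat\beta-\beta)=\bs0$ under $H_0(\tu)$, by the conditional-mean-zero property shown above.
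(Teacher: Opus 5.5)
Your proposal is correct in substance, but it reaches the key linearization by a different route than the paper. The paper does not linearize the rank scores itself. It writes $\what{\tS}_n=-\int_0^1\phi(\tau)\dd\what{\tV}_n(\tau)$ and imports from \cite{Hu19} a uniform representation of $\what{\tV}_n(\tau)$. That representation is built on the directional quantile coefficients $\ta_{\tau,\tu}$ and the density-weighted projection $\Gu^\top\ttH(\tau)\ttG(\tau)^{-1}\tX_i$. The paper then uses Lemma~\ref{lem0}(iii) to show $\Gu^\top\ttH(\tau)=\Gu^\top\ttB\ttG(\tau)$ under $H_0(\tu)$, and Lemma~\ref{lemHS} to identify $\ta_{\tau,\tu}^\top\tX$ with the conditional quantile of $\tu^\top\tY$. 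Your vanishing drift is the same cancellation, since $\Gu^\top\ttH(\tau)-\Gu^\top\ttB\ttG(\tau)=\E\{\fuXG(\ta_{\tau,\tu}^\top\tX|\tX,\Gu^\top\tY)\,\tU\tX^\top\}$.

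What your route buys is self-containment and fewer ingredients. You use only the ordinary quantile regression of $\tu^\top\tY$ on $\tX$, which is correctly specified whether or not $H_0(\tu)$ holds. So you need neither Lemma~\ref{lemHS} nor the directional-quantile framework of \cite{Ha10a}, which is the source of the paper's restriction to absolutely continuous $\widetilde{\tX}$. In fact your symmetry property gives $\E\{\tU\,\I[W>\tX^\top\bs\beta]\}=\bs 0$ for \emph{every} fixed $\bs\beta$. Hence the process $\bs\beta\mapsto n^{-1/2}\sum_i\tU_i\I[W_i>\tX_i^\top\bs\beta]$ is exactly centered, and uniform consistency of $\hat{\bs\beta}(\tau)$ on $[\varepsilon,1-\varepsilon]$ plus stochastic equicontinuity suffice. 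No Bahadur rate and no conditional density given $\Gu^\top\tY$ is required. The price is that you must prove these uniform quantile-regression facts for a random heteroscedastic design yourself, whereas the paper obtains them by citation.

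One bookkeeping step is wrong as stated, although the error is compensated elsewhere.
\begin{itemize}
\item The term $n^{-1/2}\tU^\top\ttX(\ttX^\top\ttX)^{-1}\ttX^\top\bh$ is not a remainder. The constraint in \eqref{eq:DQ} gives $\ttX^\top\ah(\tau)=(1-\tau)\ttX^\top\mathbf{1}$, hence $\ttX^\top\bh=\bar\phi\,\ttX^\top\mathbf{1}$, and the term equals $\bar\phi\,n^{-1/2}\sum_i\tU_i$ exactly. This is $O_P(1)$ and non-degenerate when $\bar\phi\neq0$. Your ``$o_P(1)$ times $O_P(1)$'' argument fails here because $n^{-1/2}\ttX^\top\bh$ is of order $\sqrt n$.
\item Correspondingly, $n^{-1/2}\sum_i\tU_i\what{b}_i$ linearizes to the \emph{uncentered} sum $n^{-1/2}\sum_i\tU_i\,\phi(F_{\tu^\top\teps}(\tu^\top\teps_i))$. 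Your displayed Gutenbrunner--Jure\v{c}kov\'a-type identity, with $\tc_i$ rather than $\tc_i-\hat{\tc}_i$ on the left, fails when $\bar\phi\neq0$. For $\tc_i\equiv1$ the left side is $\bar\phi\sqrt n$ and the right side vanishes.
\end{itemize}
The two slips cancel, so your final representation is right. The clean bookkeeping uses the paper's identity $(\ttI-\ttM_{\ttX})\ah(\tau)=\ah(\tau)-(1-\tau)\mathbf{1}$. It gives exactly $\what{\tS}_n=\int_0^1 n^{-1/2}\sum_i\tU_i[\what{a}_i(t)-(1-t)]\dd\phi(t)$, with no projection term left over. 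The $-\bar\phi$ then comes from $\int_0^1(1-t)\dd\phi(t)=\bar\phi-\phi(0)$.
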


The formula for $\tSigma$ can be further simplified in some special cases:

\begin{theorem}\label{th2} Consider the same assumptions as in Theorem~\ref{prop:p1}.
If either\\
(i) $\phi$ is the sign score function, $\phi(t) = \mathrm{sgn}(t-1/2)$, or\\
(ii)  $\tU_1 $ and $\tu^\top\teps_1$ are conditionally independent given $\tX_1$, \\
then
\begin{equation}\label{sigma0}
\tSigma=\left( \E  \tU_1 \tU_1^\top \right)\cdot \sigma^2_{\phi},  
\end{equation}
where 
\[
\sigma^2_{\phi} = \int_{0}^1 \left[ \phi(v)-\overline{\phi}\right]^2 \dd v.
\]
\end{theorem}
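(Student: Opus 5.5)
We take Theorem~\ref{prop:p1} as given and simplify the expression \eqref{varT2}. Write $V_1=\tu^\top\teps_1$ and $\psi(v)=\phi(F_{\tu^\top\teps}(v))-\overline{\phi}$. Then
\[
\tSigma=\E\{\tU_1\tU_1^\top\psi(V_1)^2\}, \qquad \tU_1=(\td^\top\tX_1)\Gu^\top\teps_1 ,
\]
with $\tX_1$ independent of $\teps_1$. Under Assumption~\ref{as:XY}, $V_1$ has a continuous distribution. Hence $F_{\tu^\top\teps}(V_1)$ is uniform on $(0,1)$, and
\[
\E\psi(V_1)^2=\int_0^1[\phi(v)-\overline{\phi}]^2\dd v=\sigma^2_\phi .
\]
In both cases the aim is to factor the expectation as $\E(\tU_1\tU_1^\top)\,\E\psi(V_1)^2$.

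\textbf{Case (ii).} Condition on $\tX_1$. By the assumed conditional independence,
\[
\E\{\tU_1\tU_1^\top\psi(V_1)^2\mid\tX_1\}=\E\{\tU_1\tU_1^\top\mid\tX_1\}\,\E\{\psi(V_1)^2\mid\tX_1\}.
\]
Since $V_1=\tu^\top\teps_1$ is independent of $\tX_1$, the second factor equals the constant $\sigma^2_\phi$. Taking expectations (tower property) gives $\tSigma=\sigma^2_\phi\,\E\tU_1\tU_1^\top$. Integrability is guaranteed by the moment conditions in Assumption~\ref{as:XY} together with boundedness of $\phi$.

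\textbf{Case (i).} Here $\phi(t)=\mathrm{sgn}(t-1/2)$, so $\overline{\phi}=0$ and $\psi(v)^2=\mathrm{sgn}(F(v)-1/2)^2$. This equals $1$ except on the set $\{F(V_1)=1/2\}$, which has probability zero because $F(V_1)$ is uniform. Hence $\psi(V_1)^2=1$ almost surely, so $\tSigma=\E\tU_1\tU_1^\top$. Also $\sigma^2_\phi=\int_0^1 1\,\dd v=1$, and \eqref{sigma0} follows.

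Both cases are routine. The only delicate points are the almost-sure argument for the sign function at the median and the use of the probability integral transform, which requires continuity of $F_{\tu^\top\teps}$ as guaranteed by Assumption~\ref{as:XY}.
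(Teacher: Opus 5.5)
Your proof is correct and follows the paper's own argument. For the sign scores, both you and the paper use $\overline{\phi}=0$ and $\phi^2=1$ away from the null event $\{F_{\tu^\top\teps}(\tu^\top\teps_1)=1/2\}$; under conditional independence, both factor the conditional expectation given $\tX_1$ and use the independence of $\tX_1$ and $\teps_1$ to obtain $\sigma^2_\phi\,\E\tU_1\tU_1^\top$. You are slightly more explicit than the paper about the probability integral transform and about integrability, both of which the paper leaves implicit.
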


As a consequence of Theorem~\ref{prop:p1}, if $\widehat{\tSigma}$ is a consistent estimator of $\tSigma$ and $H_0(\tu)$ holds, then the null hypothesis can be tested by means of the test statistic
\begin{equation}\label{test}
{T}_n:=\what{\tS}_n^\top\what{\tSigma}^{-1} \what{\tS}_n
\end{equation}
that converges to the $\chi^2_{m-1}$ distribution as $n\to\infty$. The null hypothesis $H_0(\tu)$ should be rejected for 
large values of $T_n$.

Note that under the conditions of Theorem~\ref{th2}, the test statistic  $T_n$  has the same limiting distribution as the rank score $\chi^2$-test statistic of (2.9) in \cite{Gu93}, obtained as if regressors $\Gu^\top \tY_i$ and $\tX_i$ in the 
quantile regression model for $\tu^\top \tY_i$ were deterministic.
It follows from Lemma~\ref{lem:Norm}    that $\tU_1$ and $\tu^\top\teps_1$ are conditionally independent given $\tX_1$ in the important special case when the distribution of $\teps$ is multivariate normal.

\begin{remark} 
Let $\widehat{\tB}$ be a consistent estimator of $\ttB$, for instance the ordinary least squares  (OLS) estimator $\widehat{\tB} = \ttY^\top\ttX (\ttX^\top \ttX)^{-1}$. 
For the situation described in Theorem~\ref{th2}, 
an estimator of $\tSigma$ can be constructed easily as
\begin{equation}\label{eq:Sigma}
\widehat{\tSigma} = \sigma^2_{\phi} \cdot \frac{1}{n} \sum_{i=1}^n \widehat{\tU}_i \widehat{\tU}_i^\top. 
\end{equation}
where $\widehat{\tU}_i$ are defined as in \eqref{eq:WU}, but with $\tB$ replaced by $\widehat{\tB}$.  Observe that if $n\geq m$, then $\widehat{\tSigma}$ is invertible with probability 1.

For a more general setup with formula \eqref{varT2},  let  $\widehat{\td}$ be  a consistent estimator of  $\td$ and  define $\widehat{W}_i$  as in \eqref{eq:WU}, but with $\tB$ replaced by $\widehat{\tB}$. 
Set
\[
\widehat{F}_e(y) = \frac{1}{n}\sum_{i=1}^n \mathrm{I}[e_i\leq y]
\]
for
\[
e_i = \frac{\tu^\top\tY_i - \tu^\top\widehat{\tB}\tX_i}{\widehat{\td}^\top \tX_i} = \frac{\widehat{W_i}}{\widehat{\td}^\top \tX_i} ,\ \ i=1,\dots,n,
\]
and define the estimator of $\tSigma$ as
\begin{equation}\label{eq:Sigma.est}
\widehat{\tSigma} = \frac{1}{n}\sum_{i=1}^n \widehat{\tU}_i \widehat{\tU}_i^\top \left[\phi\bigl(\widehat{F}_e(e_i)\bigr) - \overline{\phi} \right]^2.
\end{equation}

Recall that the homoscedastic case, where $\tY_i = \tB \tX_i + \teps_i$, $i=1,\dots,n$, is obtained for $\td=(1,0,\dots,0)^\top$. In that case, one just takes $\widehat{\td}^\top \tX_i = 1$ and
$e_i=\widehat{W}_i$ for all $i=1,\dots,n$.  Consequently, $\widehat{F}_e$ is then the sample cumulative distribution function of $\widehat{W}_1,\dots,\widehat{W}_n$. 
\end{remark}

\begin{remark} 
The test statistic $T_n$ from \eqref{test} can be expressed as
 \[
 T_n = \frac{1}{n}\, \widehat{\tb}^\top (\tI - \ttM_{\ttX})\ttY \Gu \what{\tSigma}^{-1} \Gu^\top \ttY^\top(\tI - \ttM_{\ttX}) \widehat{\tb}.
 \]
This implies that $T_n = T_n (\ttY,\ttX)$ enjoys some interesting invariant properties. 

\smallskip

(i) \emph{Independence on the choice of $\Gu$.}  
The test statistic
$T_n$ is invariant with respect to the choice of $\Gu$,
provided that the estimator  $\what{\tSigma} $ is constructed as  in 
 \eqref{eq:Sigma} or \eqref{eq:Sigma.est}. 
 
Indeed, the estimators in \eqref{eq:Sigma} and \eqref{eq:Sigma.est}  can be written as $\what{\tSigma}= \Gu^\top \tC_n \Gu$ for some matrix $\tC_n$ that does not depend on $\Gu$. 
 If $\widetilde{\Gu}\ne \Gu$ is another complement of the vector $\tu$ to an orthogonal matrix, then both $\Gu$ and $\widetilde{\Gu}$ are bases of the space orthogonal to $\tu$ and there exists an orthogonal matrix $\tA$ such that $\widetilde{\Gu} = \Gu \tA$.  It then easily follows that $ \widetilde{\Gu}(\widetilde{\Gu}^\top \tC_n \widetilde{\Gu})^{-1}\widetilde{\Gu}^{\top}= \Gu(\Gu^\top \tC_n \Gu)^{-1}\Gu^{\top}$. 
Therefore, $T_n$ does not depend on the choice of $\Gu$, because $\widehat{\tb}$ does not depend on $\Gu$ either.
 
 \smallskip
 
(ii) \emph{Shift invariance.} Let $\ttA\in \Rr^{p\times m}$ be a matrix.  The test statistic $\widehat{\tS}_n = \widehat{\tS}_n(\ttY,\ttX)$ from \eqref{Sn} is invariant with respect to the transformation $\bs{Y}\mapsto \bs{Y}+\ttA^\top\bs{X}$. Since the first component of $\tX$ is 1, this involves also the special case of a deterministic shift  $\bs{Y}\mapsto \bs{Y}+\bs{s}$ for  a vector $\bs{s}\in \Rr^m$ that is obtained for $\ttA^\top = (\bs{s} \, | \bs{0}_{m \times p-1})$. If the estimator $\what{\tSigma} $ satisfies $\what{\tSigma}(\ttY+\ttX\ttA,\ttX) = \what{\tSigma}(\ttY,\ttX)$, then also 
$T_n$ 
is invariant to this transformation.

Indeed, if $\widetilde{\tY} = \tY +\ttA^\top\bs{X}$, then  $\widetilde{\tY}$ satisfies model \eqref{RegMod1} with $\widetilde{\tB}=\tB+\ttA^\top$ and $\widetilde{\ttY} = \ttY + \ttX \ttA$. 
Furthermore, in \eqref{eq:DQ}, $\tu^\top \widetilde{\ttY}^\top \ta = \tu^\top\ttY^\top \ta + \tu^\top\tA^\top \cdot (1-\tau)\ttX^\top \boldsymbol{1}$, where the second summand does not depend on~$\ta$. Therefore, the regression scores and integrated regression scores remain unchanged.  
Consequently, it follows from \eqref{Sn} and the definition of $\ttM_{\ttX}$ that $\widehat{\tS}_n(\widetilde{\ttY},\ttX) = \widehat{\tS}_n(\ttY,\ttX) $. 

The assumption  $\what{\tSigma}(\ttY+\ttX\ttA,\ttX) = \what{\tSigma}(\ttY,\ttX)$ is always satisfied by \eqref{eq:Sigma} and \eqref{eq:Sigma.est} provided that $\widehat{\tB}$  is shift equivariant (which is the case for the OLS  estimator) and $\widehat{\td}$ is shift invariant.

\smallskip

(iii) \emph{Rotation invariance.} Let $\ttA$ be an orthogonal $m\times m$ matrix and consider
 the rotational transformation $\tY \mapsto  \ttA \tY$. If the estimator $\what{\tSigma}$ is constructed as in  \eqref{eq:Sigma} or \eqref{eq:Sigma.est}, with estimators satisfying $\widehat{\tB}(\ttY \tA^\top,\ttX) = \tA \widehat{\tB}(\ttY,\ttX)$ (which is the case for the OLS estimator) and $\widehat{\td}(\ttY \tA^\top,\ttX) = \widehat{\td}(\ttY,\ttX)$, then $T_n$ is invariant to this transformation. 
 
Indeed, set $\widetilde{\tY} =  \ttA \tY= (\tA \tB) \tX + (\td^\top \tX) \tA \teps$. Then $\widetilde{\tY}$ satisfies model \eqref{RegMod1} with $\widetilde{\tB} =  \tA \tB$ and error term $\widetilde{\teps} = \tA \teps$. It follows from the definition of axial symmetry that $\teps$ is axially symmetric around $\tu$ if and only if $\widetilde{\teps}$ is axially symmetric around $\widetilde{\tu} = \tA \tu$. Let $\widetilde{T}_n$ be the test statistic constructed from $\widetilde{\ttY} = \ttY \tA^\top$ and $\ttX$ for testing $H_0(\widetilde{\tu})$ for $\widetilde{\tu}=\tA \tu$. Note that in \eqref{eq:DQ}, $\widetilde{\tu}^\top \widetilde{\ttY}^\top \ta = \tu^\top\ttY^\top \ta$, and, therefore, 
the rank score vector corresponding to the triple $(\widetilde{\tu},\widetilde{\ttY},\ttX)$ is the same as the rank score vector corresponding to $(\tu,\ttY,\ttX)$.  
We can consider $\widetilde{\Gu} = \tA \Gu$ in the definition of $\widetilde{T}_n$ thanks to (i) above. Then $\widetilde{\ttY} \widetilde{\Gu} = 
\ttY \tA^\top \tA \Gu = \ttY\Gu$, and it follows from \eqref{Sn} that $\widehat{\tS}_n(\widetilde{\ttY},\ttX, \widetilde{\tu}) = \widehat{\tS}_n( \ttY,\ttX,\tu)$. The assumptions on $\widehat{\tSigma}$ imply that also  $\widetilde{T}_n = T_n$.

\smallskip

(iv) \emph{Invariance with respect to a linear transformation in $\tX$.}  Let $\tC\in \Rr^{p\times p}$ be a regular matrix and consider the transformation $\tX \mapsto\tC \bs{X}$.
If the estimator $\what{\tSigma}$ satisfies  $\what{\tSigma}(\ttY,\ttX\tC^\top) = \what{\tSigma}(\ttY,\ttX)$,  
then $T_n$ is  invariant with respect to this transformation. 
This assumption 
is always satisfied by $\what{\tSigma}$ in \eqref{eq:Sigma} and \eqref{eq:Sigma.est} provided that $\widehat{\tB}$
satisfies $\widehat{\tB}(\ttY, \ttX \tC^\top) =\widehat{\tB}(\ttY, \ttX)  \tC^{-1}$ (which is the case for the OLS estimator) and $\widehat{\td}$ satisfies $\widehat{\td}(\ttY, \ttX \tC^\top) = (\tC^\top)^{-1} \widehat{\td}(\ttY, \ttX )$.

Indeed, set $\widetilde{\tX} = \tC \tX$. 
Then
$\tY = (\tB\tC^{-1}) \widetilde{\tX} +  [(\tC^{\top})^{-1}\td]^{\top} \widetilde{\tX} \teps$, which implies that the couple $(\tY,\widetilde{\tX})$ satisfies model \eqref{RegMod1} with the same error term $\teps$. Note that
 $\widetilde{\ttX} = \ttX \tC^\top$ and $\ttM_{\ttX} =  \ttM_{\widetilde{\ttX}}$. 
Furthermore, it follows from \eqref{eq:DQ} that $\widehat{\tb}(\ttY,\widetilde{\ttX}) = \widehat{\tb}(\ttY,\ttX)$.
Hence, $\widehat{\tS}_n (\ttY,\widetilde{\ttX}) = \widehat{\tS}_n (\ttY,\ttX)$.  Finally, the invariance assumptions on $\what{\tSigma}$ imply also the invariance of $T_n$.

\end{remark}
\section{Simulation Study}
\label{Demo}

This section explores the small sample properties of the test based on the test statistic $T_n$. 
The simulation study
is designed to be representative:  It uses random samples of size $n = 250, 500,$ or $1\;000$, response dimension $m = 2, 5,$ or $10$, regressor dimension $p= 1, 3, 5,$ or $10$, the sign, Wilcoxon, and van der Waerden scores, both homoscedastic and heteroscedastic linear models, and four error distributions, including the multivariate uniform, normal and Student $t_7$ ones.

Set $\tD = \mathrm{diag}\{1,2,\dots,m\}$ and $\tB=\mathbf{1}_m\mathbf{1}_p^\top$.
The observations $(\tY_i^\top,\tX_i^\top)^\top$, $i=1\dots,n$, are generated independently from model \eqref{RegMod1} with $\tX = (1,\tZ^\top)^\top$, where $\tZ$ is independent of $\teps$ and
\begin{itemize}[nosep]
\item[a)] Model A: $\tZ \sim \Ncal_{p-1}(\bs{0},\tI_{p-1})$, $\td=(1,0,\dots,0)^\top$ and $\teps \sim \Ncal_m(\bs{0},\tD^2)$,  
\item[b)] Model B: $\tZ \sim \Ncal_{p-1}(\bs{0},\tI_{p-1})$,  $\td=(1,0,\dots,0)^\top$ and  $\teps = \tD (\boldsymbol{\zeta} -\E \boldsymbol{\zeta})$ where $\boldsymbol{\zeta}$ is uniformly distributed on $[0,1]^m$, 
\item[c)] Model C: $\tZ \sim \Ncal_{p-1}(\bs{0},\tI_{p-1})$,  $\td=(1,0,\dots,0)^\top$ and  $\teps =  \tD \boldsymbol{\zeta}$ where $\boldsymbol{\zeta}$ follows the $m$-variate $t_7$ distribution (with 7 degrees of freedom), 
\item[d)] Model D:  $\tZ$ is uniformly distributed on $[0,1]^{p-1}$,  $\td= (1,2,\dots,p)^\top$ and $\teps \sim \Ncal_m(\bs{0},\tD^2)$.
\end{itemize}

Models A--C are homoscedastic, Model D is heteroscedastic. In all of the four models, the error term $\teps$ is axially symmetric around the axis $\tu_0 = (1,0,\dots,0)^\top$. Indeed, for this $\tu_0$, one can set $\tQ= \tI_m$ and $\Gu$ can be taken as $\Gu^\top = (\boldsymbol{0}|\tI_{m-1})$. If one partitions~$\teps$ as $\teps=(\eps_1,\teps_2^\top)^\top$, then $\teps$ is axially symmetric around $\tu_0$ if and only if $\teps$ has the same distribution as $(\eps_1,-\teps_2^\top)^\top$, see Lemma \ref{lemEqiv1}. This is clearly satisfied for all four models A--D.

The test statistic $T_n$ is computed from \eqref{test}. For the homoscedastic models A--C, the estimator  $\widehat{\tSigma}$  is  computed from \eqref{eq:Sigma.est} where $\widehat{F}_e$ is the empirical distribution function of $\widehat{W}_i$'s that are computed by means of the OLS estimator $\widehat{\tB}$. Model D satisfies the conditions of Theorem~\ref{th2}, see Lemma~\ref{lem:Norm}. Therefore, we can avoid the estimation of $\widehat{\td}$ by using the simplified estimator $\widehat{\tSigma}$ in \eqref{eq:Sigma}.

For the data generated from models A--D, we test the hypotheses $H_0(\tu_\alpha)$ with $\tu_{\alpha} = (\cos \alpha, \sin \alpha, \boldsymbol{0}_{m-2}^\top)^\top$ for various $\alpha \in [0,\pi/12]$. Such a null hypothesis holds only for $\alpha=0$. The empirical powers are 
computed from $N=1\,000$ replications and presented in Figures~\ref{fig1}--\ref{fig3} in \ref{ap2}.

Figure~\ref{fig1} compares the empirical power of  $T_n$ with normal scores and the test level $0.05$ for various dimensions $m\in\{2,5,10\}$, $p\in\{1,10\}$ and the sample size $n=1\,000$. Note that $p=1$ corresponds to the non-regression case (i.e., without stochastic regressors).  
It is apparent that the difference in the test performance is negligible for the two different regressor dimensions $p=1$ and $p=10$. 
In contrast, the power significantly declines with dimension $m$.

Figure~\ref{fig2} compares the performance of  $T_n$ with normal scores for different sample sizes $n\in \{250, 500, 1\;000\}$, $p=5$ and $m\in\{2,5,10\}$. It demonstrates that the test power increases with $n$ and $\alpha$ and decreases with $m$.
 
Finally, Figure~\ref{fig3} illustrates the impact of different (sign, Wilcoxon, and normal) score functions with regressor dimension $p=3$,
response dimension $m\in\{2,5,10\}$ and sample size $n=1\,000$. 
For models A, C and D with elliptical innovations, the normal scores and the Wilcoxon scores yield comparable power but the test using the sign scores is somewhat less powerful. In model B with the non-elliptical noise, the differences among the scores regarding the test power are more pronounced: the normal scores outperform the Wilcoxon scores, and the sign scores perform the worst.

The results confirm that the axial symmetry test based on $T_n$ is sized adequately and behaves quite well in all the settings considered. 
The test is quite fast and easy to compute, with a simple null asymptotic distribution. Therefore,  it is suitable even for large dimensions $m$ and $p$.   
The normal scores (obtained from the van der Waerden score function) seem to perform the best in most of the situations. The Wilcoxon and sign scores  can be recommended only in some special cases such as for heavy-tailed distributions or in the presence of extreme outliers.

\section{Real Data}
\label{Real}

The proposed test is illustrated with a data set on expenditures in different commodity groups from R package \textsf{HSAUR2}, \cite{hsaur}. The data come from a large survey on household expenditure in Hong Kong. 
The original expenditures of single persons were expressed in proportions of the total expenditures and logarithmized. The model \eqref{RegMod1} is considered for $\tY$ being a bivariate vector of the transformed expenditures for\\[1ex]
(FG)  food and goods, or \\
(GS) goods and services,\\[1ex]
and for $\tX = (1,G)^\top$ where $G$ is the gender of the customers ($G=1$ for men and $G=0$ for women). 
We will test the hypothesis of the conditional axial symmetry around an axis in direction $\tu_e = \frac{1}{\sqrt{2}}(1,1)^\top$, 
which is here equivalent to the test of the conditional exchangeability, see Lemma~\ref{lem:exchange}.
In this context, $H_0(\tu_e)$ means that the unexplained fluctuations in the two types of expenditures are statistically indistinguishable after controlling for the gender predictor. 
If the null hypothesis holds, the covariance structure of the error term admits a simplified representation, which may in turn yield more efficient inference.
Note that Theorem~\ref{prop:p1} assumes that $\tX = (1,\widetilde{\tX}^\top)^\top$ where   $\widetilde{\tX}$ has an absolutely continuous distribution, because \cite{Ha10a} define the directional quantiles  only under this assumption. 
However, after rereading all the relevant proofs, we believe that our Theorem 1 holds even for discrete  $\widetilde{\tX}$, as in
the present case. This is also confirmed by our numerical results.

The elements of matrix $\tB$ from the considered model \eqref{RegMod1} were estimated by OLS, and the variance parameter $d$ in $\td=(1,d)^\top$ was estimated from the sample variances of the OLS residuals  using two possible estimators $\widehat{d}$ and $\widetilde{d}$ that are described in more detail in  \ref{ap2}. 
  The test statistic is computed as in \eqref{test}, with $\widehat{\tSigma}$ from \eqref{eq:Sigma.est} with normal, Wilcoxon, and sign scores. The obtained p-values  are provided in Table~\ref{tab:1}.
For food and goods expenses, the null hypothesis $H_0(\tu_e)$ cannot be rejected at level 0.05 for all of the considered scores, but the result is rather borderline for the normal score function. Note that the results do not seem to be very affected by the choice of the estimator of the variance parameter $d$. In contrast, if the same testing procedure is applied to expenditures on goods and services, then $H_0(\tu_e)$ is rejected with p-value less than $0.001$ for all the considered score functions and both variance estimators. 
The conclusions of the test are in agreement with the plots of residuals $\widehat{\eps}_i = (\widehat{\td}^\top \tX)^{-1} (\tY_i - \widehat{\tB}\tX_i)$ shown in Figure~\ref{fig:data} in \ref{ap2}.

\begin{table}[ht]
\centering
\begin{tabular}{r|rr|rr}
\toprule
&\multicolumn{2}{c|}{Food \& Goods}&\multicolumn{2}{c}{Goods \& Services}\\
Scores & $\widehat{d}=  0.0385$ &  $\widetilde{d} = 0.0558 $ &   $\widehat{d} =  0.3815$ &  $\widetilde{d} =  0.2077$ \\ 
\midrule
Normal & 0.0848 & 0.0846 & 0.0002 & 0.0001 \\ 
Wilcoxon  & 0.1071 & 0.1067 & $<$ 0.0001& $<$ 0.0001 \\ 
 Sign & 0.2317 & 0.2317 & $<$ 0.0001 & $<$ 0.0001 \\ 
\bottomrule
\end{tabular}
\caption{P-values of the axial symmetry test  in direction $\tu_e = \frac{1}{\sqrt{2}}(1,1)^\top$ for bivariate logarithmic normalized expenses for  (FG) food and goods and (GS) goods and services for the three different scores and two possible variance estimators.}\label{tab:1}
\end{table}

\section{Concluding Remarks}\label{CR} 
This article proposes a test for conditional (or, error) axial symmetry around a line in a pre-specified direction in a multivariate heteroscedastic linear regression setup. The test is constructed from integrated rank scores. It is fast and easy to compute even for high dimensional data.
It is also quite powerful, especially with the van der Waerden scores. The test was also illustrated with a simulation study and applied to real data on expenses. 
 
\section*{Acknowledgements}

The research of Miroslav \v{S}iman was supported by the Czech Science Foundation project GA24-10078S. 
The research of \v{S}\'{a}rka Hudecov\'{a} was supported by the Czech Science Foundation project 25-15844S.

\bibliographystyle{apalike} 
\bibliography{HSbib}

\clearpage 

\appendix

\section{Auxiliary results about axial symmetry}\label{ap1}

Recall that a random vector $\tZ$ with a finite expectation is axially symmetric around an axis in direction $\tu$, $\|\tu\|=1$, 
if and only if $\tZ-\E \tZ$ has the same distribution as $\tR_\tu(\tZ-\E \tZ )$ for the orthogonal reflection matrix $\tR_\tu =2\tu\tu^\top  - \tI$.
  
Lemma \ref{lemEqiv1} provides a useful equivalent characterization of axial symmetry of a centered random vector. Note that the result has been implicitly used already in \cite{Hu20}.
 
\begin{lemma}\label{lemEqiv1}
Let $\teps$ be a random vector with $\E \teps=\boldsymbol{0}$. Define $\widetilde{\tQ} = (\tu|-\Gu)$.
Then $H_0(\tu)$ holds, i.e., $\Lcal(\teps) = \Lcal(\tR_\tu \teps)$, if and only if $\Lcal(\tQ^\top\teps) = \Lcal(\widetilde{\tQ}^\top\teps)$.
\end{lemma}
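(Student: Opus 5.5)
The plan is to turn the claim into a statement about the orthogonal matrix $\tQ$ and the reflection $\tR_\tu$, using only that $\tQ=(\tu|\Gu)$ is orthogonal. First I will express $\tR_\tu$ through $\tQ$. Orthogonality gives $\tQ\tQ^\top=\tu\tu^\top+\Gu\Gu^\top=\tI$, hence $\Gu\Gu^\top=\tI-\tu\tu^\top$, and therefore
\[
\tR_\tu = 2\tu\tu^\top-\tI = \tu\tu^\top-\Gu\Gu^\top = (\tu|\Gu)\,\mathrm{diag}(1,-\tI_{m-1})\,(\tu|\Gu)^\top = \widetilde{\tQ}\,\tQ^\top .
\]
A consequence is that $\tQ^\top\tR_\tu = \tQ^\top\widetilde{\tQ}\tQ^\top$. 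Since $\tQ^\top\widetilde{\tQ}=\mathrm{diag}(1,-\tI_{m-1})$, this product equals $\widetilde{\tQ}^\top$. To check the last step, note that $\mathrm{diag}(1,-\tI_{m-1})\tQ^\top$ stacks $\tu^\top$ on top of $-\Gu^\top$, which is exactly $\widetilde{\tQ}^\top$.

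Next I will prove the two implications using the fact that equality in law is preserved under any fixed measurable (here linear) map.
\begin{itemize}
\item Suppose $\Lcal(\teps)=\Lcal(\tR_\tu\teps)$. Applying $\tQ^\top$ to both sides gives $\Lcal(\tQ^\top\teps)=\Lcal(\tQ^\top\tR_\tu\teps)=\Lcal(\widetilde{\tQ}^\top\teps)$.
\item Suppose instead $\Lcal(\tQ^\top\teps)=\Lcal(\widetilde{\tQ}^\top\teps)$. Applying the linear map $\tQ$ and using $\tQ\tQ^\top=\tI$ gives $\Lcal(\teps)=\Lcal(\tQ\widetilde{\tQ}^\top\teps)$. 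Transposing the identity $\tR_\tu=\widetilde{\tQ}\tQ^\top$ (note $\tR_\tu$ is symmetric) shows $\tQ\widetilde{\tQ}^\top=\tR_\tu$, so $\Lcal(\teps)=\Lcal(\tR_\tu\teps)$.
\end{itemize}

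Since $\E\teps=\bs{0}$, no centering is involved, and the condition $\Lcal(\teps)=\Lcal(\tR_\tu\teps)$ is exactly $H_0(\tu)$ as defined earlier. There is no real obstacle here. The only step needing care is the matrix identity $\tQ^\top\tR_\tu=\widetilde{\tQ}^\top$ (equivalently $\tR_\tu=\widetilde{\tQ}\tQ^\top$), which rests on $\Gu\Gu^\top=\tI-\tu\tu^\top$, together with $\tu^\top\tu=1$ and $\Gu^\top\tu=\bs{0}$ from $\tQ^\top\tQ=\tI$.
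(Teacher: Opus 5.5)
Your proposal is correct and is essentially the paper's proof. You write $\tR_\tu=\tu\tu^\top-\Gu\Gu^\top$ as a product of $\tQ$ and $\widetilde{\tQ}$; the paper uses $\tQ\widetilde{\tQ}^\top$, which is the same matrix as your $\widetilde{\tQ}\tQ^\top$ since $\tR_\tu$ is symmetric. You then push equality in law through the linear maps $\tQ^\top$ and $\tQ$ exactly as the paper does.
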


\begin{proof}
The definition of the orthogonal matrix $\tQ$ implies $\tu\tu^\top +\Gu\Gu^\top=\tI$. Therefore, $\tR_\tu = \tu\tu^\top-\Gu\Gu^\top=\tQ\widetilde{\tQ}^\top$.
Consequently, 
\[
\Lcal(\tQ^\top\teps) = \Lcal(\widetilde{\tQ}^\top\teps) \Rightarrow \Lcal(\tQ\tQ^\top\teps) = \Lcal(\tQ\widetilde{\tQ}^\top\teps) \Rightarrow
\Lcal(\teps) = \Lcal(\tR_\tu \teps)
\]
and 
\[
\Lcal(\teps) = \Lcal(\tR_\tu\teps) \Rightarrow \Lcal(\tQ^\top\teps) = \Lcal(\tQ^\top\tR_\tu \teps) = \Lcal(\widetilde{\tQ}^\top\teps).
\]
\end{proof}

Lemma \ref{lemEqiv2} shows that the eigenvectors of the (finite) variance matrix determine the directions of the axes around which the axial symmetry is possible. 

\begin{lemma}\label{lemEqiv2}
Let $\teps$ be a random vector with finite second order moments, $\E \teps=\boldsymbol{0}$ and $\var \teps = \tOmega$.
If $H_0(\tu)$ holds, then $\tu$ is an eigenvector of $\tOmega$.
\end{lemma}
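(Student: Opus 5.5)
The plan is to show that under $H_0(\tu)$ the variance matrix commutes with the reflection $\tR_\tu$. Then $\tR_\tu$ preserves the eigenspaces of $\tOmega$, and a short linear-algebra step gives an eigenvector in direction $\tu$.

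\emph{Step 1.} Since $\Lcal(\teps)=\Lcal(\tR_\tu\teps)$ and both vectors have mean zero and finite second moments, their variance matrices agree:
\[
\tOmega=\var \teps=\var(\tR_\tu\teps)=\tR_\tu\tOmega\tR_\tu^\top=\tR_\tu\tOmega\tR_\tu .
\]
Here we use that $\tR_\tu=2\tu\tu^\top-\tI$ is symmetric and orthogonal, so $\tR_\tu^2=\tI$. Multiplying on the left by $\tR_\tu$ gives $\tR_\tu\tOmega=\tOmega\tR_\tu$.

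\emph{Step 2.} Apply both sides to $\tu$. Since $\tR_\tu\tu=\tu$, we get $\tR_\tu(\tOmega\tu)=\tOmega\tR_\tu\tu=\tOmega\tu$. So $\tOmega\tu$ lies in the $+1$ eigenspace of $\tR_\tu$.

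\emph{Step 3.} Identify that eigenspace. Write $\tv=\tOmega\tu$. If $\tR_\tu\tv=\tv$, then $2\tu(\tu^\top\tv)=2\tv$, so $\tv=(\tu^\top\tv)\tu$. Hence the $+1$ eigenspace is exactly $\mathrm{span}\{\tu\}$. It follows that $\tOmega\tu=\lambda\tu$ with $\lambda=\tu^\top\tOmega\tu=\var(\tu^\top\teps)$, so $\tu$ is an eigenvector of $\tOmega$.

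An equivalent route goes through Lemma~\ref{lemEqiv1}. Equality in law of $(\tu^\top\teps,\Gu^\top\teps)$ and $(\tu^\top\teps,-\Gu^\top\teps)$ gives $\cov(\tu^\top\teps,\Gu^\top\teps)=-\cov(\tu^\top\teps,\Gu^\top\teps)$, so $\Gu^\top\tOmega\tu=\bs{0}$. Because $\tu\tu^\top+\Gu\Gu^\top=\tI$, this yields $\tOmega\tu=\tu\tu^\top\tOmega\tu$.

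No step is a real obstacle. The only point needing care is in Step 1: it uses that the definition of $H_0(\tu)$ for a centered $\teps$ involves the reflection itself, with no extra shift, so that equality in distribution directly transfers to equality of second moments. That transfer is justified by the assumption of finite second moments.
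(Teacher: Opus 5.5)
Your proof is correct and is essentially the paper's argument: equality in law of $\teps$ and $\tR_\tu\teps$ forces equality of their variance matrices, and a short linear-algebra step then gives $\tOmega\tu\in\mathrm{span}\{\tu\}$. Your alternative route via Lemma~\ref{lemEqiv1} and $\Gu^\top\tOmega\tu=\bs{0}$ is exactly the paper's proof, while your main route through $\tR_\tu\tOmega=\tOmega\tR_\tu$ and the $+1$ eigenspace of $\tR_\tu$ is the same computation (since $\tR_\tu=\tQ\widetilde{\tQ}^\top$) and matches the argument the paper itself uses later in the proof of Lemma~\ref{lem:El}.
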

\begin{proof}
Consider the same $\tQ$ and $\widetilde{\tQ}$ as in Lemma \ref{lemEqiv1}. Lemma \ref{lemEqiv1} implies that 
the vectors $\tQ^\top \teps$ and $\widetilde{\tQ}^\top\teps$ have the same variance matrix. Consequently, 
\[
\tQ^\top \tOmega \tQ = \widetilde{\tQ}^\top \tOmega \widetilde{\tQ}, 
\]
which is equivalent to 
\[
\tu^\top\tOmega\Gu=\boldsymbol{0}.
\]
It follows from the construction of $\Gu$ that  $\tv^\top\Gu = \boldsymbol{0}$ for a vector $\tv\in\R^m$ if and only if 
$\tv = \kappa \tu$ for some $\kappa\in\R$. This means that $\tOmega \tu = \kappa \tu$ for some $\kappa\in\R$. Therefore, 
$\tu$ is an eigenvector of $\tOmega$ corresponding to the eigenvalue $\kappa$. 
\end{proof}

\begin{lemma}\label{lem:Norm}
Let $\teps$ be a random vector with finite second order moments, $\E \teps=\boldsymbol{0}$ and $\var \teps = \tOmega$. If  $H_0(\tu)$ holds, then  $\tu^\top\teps$ and $\Gu^\top\teps$ are uncorrelated. Moreover, if 
 $\teps$ has a normal distribution, then $\tu^\top\teps$ and $\Gu^\top\teps$ are independent.  
\end{lemma}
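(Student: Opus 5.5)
We prove the two claims in turn. For uncorrelatedness, the plan is to reuse the computation from the proof of Lemma~\ref{lemEqiv2}. By Lemma~\ref{lemEqiv1}, $H_0(\tu)$ gives $\Lcal(\tQ^\top\teps)=\Lcal(\widetilde{\tQ}^\top\teps)$, so the two vectors have the same variance matrix, and comparing the off-diagonal blocks gives $\tu^\top\tOmega\Gu=-\tu^\top\tOmega\Gu$. A more direct route uses the same equality in law: $(\tu^\top\teps,\Gu^\top\teps)$ has the same distribution as $(\tu^\top\teps,-\Gu^\top\teps)$. Since the second moments are finite and $\E\teps=\boldsymbol{0}$, this yields
\[
\cov(\tu^\top\teps,\Gu^\top\teps)=\E\bigl[\tu^\top\teps\,\teps^\top\Gu\bigr]=-\E\bigl[\tu^\top\teps\,\teps^\top\Gu\bigr].
\]
Hence $\tu^\top\tOmega\Gu=\boldsymbol{0}$, which is exactly the statement that $\tu^\top\teps$ and $\Gu^\top\teps$ are uncorrelated.

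For the normal case, note that $\tQ^\top\teps=(\tu^\top\teps,(\Gu^\top\teps)^\top)^\top$ is a linear transform of a Gaussian vector. It is therefore jointly normal, with block covariance
\[
\begin{pmatrix}\tu^\top\tOmega\tu & \tu^\top\tOmega\Gu\\ \Gu^\top\tOmega\tu & \Gu^\top\tOmega\Gu\end{pmatrix}.
\]
The off-diagonal blocks vanish by the first part. For jointly Gaussian vectors, a zero cross-covariance implies independence, since the characteristic function factorizes. This gives the independence of $\tu^\top\teps$ and $\Gu^\top\teps$.

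There is no real obstacle here. The only point to be careful about is that jointness of normality is essential: independence needs the pair to be jointly Gaussian, not just each marginal. This is guaranteed because both components are linear images of the same normal vector $\teps$.
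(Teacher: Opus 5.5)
Your proof is correct and essentially matches the paper's. The paper cites Lemma~\ref{lemEqiv2} to get $\tOmega\tu=\lambda\tu$ and hence $\tu^\top\tOmega\Gu=\lambda\tu^\top\Gu=\boldsymbol{0}$; the proof of that lemma is your sign-flip argument via Lemma~\ref{lemEqiv1}, so you simply skip the eigenvector detour, and your normal case (joint normality plus zero cross-covariance) is the paper's.
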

 
\begin{proof}
Define $Z_1=\tu^\top\teps$ and $Z_2= \Gu^\top\teps$. If $H_0(\tu)$ holds, then, according to Lemma~\ref{lemEqiv2}, $\tu$~must be an eigenvector of $\tOmega$ that is associated with some eigenvalue $\lambda\in\R$.

The covariance matrix of  $Z_1$ and $\tZ_2$  is
\[
\cov(Z_1,\tZ_2) = \tu^\top\tOmega \Gu = (\tOmega \tu)^\top\Gu = \lambda \tu^\top\Gu = \boldsymbol{0}.
\]
If $\teps$ is normally distributed, then the joint distribution of $Z_1$ and $\tZ_2$ is also multivariate normal. The zero covariance matrix 
of $Z_1$ and $\tZ_2$ then implies their independence. 
\end{proof}
 
Next we are going to explore how axial symmetry relates to spherical symmetry, elliptical symmetry and exchangeability.  
Recall that a random vector $\tZ$ is spherically symmetric around $\bs{0}$ if $\mathcal{L}(\tZ) = \mathcal{L}(\tA \tZ)$ for any orthogonal matrix $\tA$. A random vector $\tW$ has an elliptical distribution with location $\tmu$ and positive semidefinite scatter matrix~$\tOmega$ if $\tW = \tmu+\tOmega^{1/2} \tZ$ for some $\tZ$ spherically symmetric around $\bs{0}$. 
If this $\tZ$ has finite second order moments, then  
 $\E \tW = \tmu$  and $\var \tW = \kappa \tOmega$ for certain $\kappa >0$. This shows that the scatter matrix of an elliptical distribution is specified uniquely up to a~multiplicative constant.

The distribution of a random vector $\tZ$ is said to be exchangeable if $\mathcal{L}(\tZ) = \mathcal{L}(\tP \tZ)$ for any permutation matrix $\tP$,
i.e., for any matrix $\tP$ obtained by permuting the rows of the identity matrix.  An elliptical random vector is exchangeable if and only if its location vector has identical components and its scatter matrix is compound symmetric.

\begin{lemma}\label{lem:spherical}
If $\teps$ is a spherically symmetric $m$-dimensional random vector with an arbitrary finite mean, then $\teps$ is axially symmetric around axes in direction $\tv$ for all $\tv\in\R^m$. 
\end{lemma}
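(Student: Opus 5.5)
Write $\tmu=\E\teps$. The plan is to verify the definition of axial symmetry directly: we show that $\teps-\tmu$ and $\tR_\tv(\teps-\tmu)$ have the same distribution for every unit vector $\tv\in\R^m$. A non-unit direction $\tv\neq\bs{0}$ is handled by normalising it, since only the direction matters.

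\textbf{Step 1: the mean is the origin.} Spherical symmetry around $\bs{0}$ gives $\Lcal(\teps)=\Lcal(\tA\teps)$ for every orthogonal $\tA$. Taking $\tA=-\tI$ yields $\Lcal(\teps)=\Lcal(-\teps)$. The mean is assumed finite, so $\tmu=-\tmu$, and hence $\tmu=\bs{0}$. The phrase ``arbitrary finite mean'' in the statement therefore only serves to guarantee that $\E\teps$ exists, so that the definition of axial symmetry applies. It does not allow a nonzero centre, because spherical symmetry is defined around $\bs{0}$.

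\textbf{Step 2: the reflection is orthogonal.} The matrix $\tR_\tv=2\tv\tv^\top-\tI$ is symmetric, and
\[
\tR_\tv^\top\tR_\tv=\tR_\tv^2=4\tv\tv^\top\tv\tv^\top-4\tv\tv^\top+\tI=\tI,
\]
using $\tv^\top\tv=1$. So $\tR_\tv$ is orthogonal.

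\textbf{Step 3: conclude.} Applying spherical symmetry with $\tA=\tR_\tv$ gives $\Lcal(\teps-\tmu)=\Lcal(\teps)=\Lcal(\tR_\tv\teps)=\Lcal(\tR_\tv(\teps-\tmu))$. This is exactly axial symmetry around the axis in direction $\tv$ through $\E\teps=\bs{0}$. As a cross-check, one could instead invoke Lemma~\ref{lemEqiv1}: the matrix $\widetilde{\tQ}\tQ^\top$ is orthogonal, so $\widetilde{\tQ}^\top\teps=\tQ^\top(\tQ\widetilde{\tQ}^\top\teps)$ has the same distribution as $\tQ^\top\teps$.

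\textbf{Main obstacle.} The argument is short, and the only point needing care is Step 1, that is, reconciling the ``arbitrary finite mean'' wording with centring. If the intended meaning were instead that $\teps-\mu$ is spherical for some $\mu$, the same argument would apply verbatim to $\teps-\E\teps$.
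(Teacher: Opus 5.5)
Your proof is correct and follows the same route as the paper: $\tR_\tv$ is orthogonal, so spherical symmetry immediately gives $\Lcal(\teps-\E\teps)=\Lcal(\tR_\tv(\teps-\E\teps))$. Your extra remarks are details the paper's one-line proof leaves implicit: spherical symmetry about $\bs{0}$ forces $\E\teps=\bs{0}$, and $\tv$ must be normalised for $\tR_\tv$ to be orthogonal.
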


\begin{proof}
Let $\tv\in\R^m$. The matrix $\tR_{\tv} =2\tv\tv^\top  - \tI$ is orthogonal. The spherical symmetry of $\teps$ then implies $\mathcal{L}(\teps-\E\teps) = \mathcal{L}(\tR_{\tv} (\teps-\E\teps))$. Therefore, $\teps$ is symmetric around an axis in direction $\tv$. 
\end{proof}

Note that Lemma \ref{lem:spherical} holds without the assumption of a finite variance matrix. Lemma~\ref{lem:El} about elliptical distributions 
avoids that assumption as well. 

\begin{lemma}\label{lem:El}
Let $\teps$ have an elliptical distribution with $\E \teps=\boldsymbol{0}$ and with a positive definite scatter matrix~$\tOmega$. Then 
$H_0(\tu)$ holds if and only if $\tu$ is an eigenvector of $\tOmega$.
\end{lemma}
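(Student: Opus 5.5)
We need to prove Lemma \ref{lem:El}: for an elliptical $\teps$ with $\E\teps=\bs{0}$ and positive definite scatter $\tOmega$, $H_0(\tu)$ holds if and only if $\tu$ is an eigenvector of $\tOmega$. Lemma~\ref{lemEqiv2} cannot be invoked directly, because no finite second moments are assumed. The plan is therefore to work with the representation $\teps=\tOmega^{1/2}\tZ$, where $\tZ$ is spherically symmetric around $\bs{0}$ (the location is $\bs{0}$ because $\E\teps=\bs{0}$ and $\E\tZ=\bs{0}$ by symmetry). We take $\tOmega^{1/2}$ to be the symmetric positive definite square root; this is the standard choice, and any other root $\tOmega^{1/2}\tA$ with $\tA$ orthogonal gives the same law because $\tA\tZ\stackrel{D}{=}\tZ$.

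\emph{Sufficiency.} Let $\tOmega\tu=\lambda\tu$. Then $\lambda>0$, and by the spectral theorem $\tOmega^{1/2}\tu=\lambda^{1/2}\tu$. Hence $\tOmega^{1/2}$ commutes with $\tu\tu^\top$, and so also with $\tR_\tu$. It follows that
\[
\tR_\tu\teps=\tR_\tu\tOmega^{1/2}\tZ=\tOmega^{1/2}\tR_\tu\tZ\stackrel{D}{=}\tOmega^{1/2}\tZ=\teps,
\]
where the distributional equality uses the spherical symmetry of $\tZ$ with the orthogonal matrix $\tR_\tu$, exactly as in Lemma~\ref{lem:spherical}.

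\emph{Necessity.} This is the main step, and we argue through characteristic functions so that no moments are needed. The characteristic function of $\teps$ has the form $\psi(\mathbf t)=g(\mathbf t^\top\tOmega\mathbf t)$ for some function $g$. Axial symmetry gives $\psi(\mathbf t)=\psi(\tR_\tu\mathbf t)$, that is,
\[
g(\mathbf t^\top\tOmega\mathbf t)=g(\mathbf t^\top\tR_\tu\tOmega\tR_\tu\mathbf t)\quad\text{for all }\mathbf t.
\]
To pass from equality of $g$-values to equality of the quadratic forms we need $g$ to be injective on $[0,\infty)$. We handle this as follows:
\begin{itemize}
\item If $\teps\neq\bs{0}$ a.s. fails to hold trivially, i.e.\ $\teps$ is nondegenerate, then $g(s)=\E\cos(\sqrt{s}\,Z_1)$, where $Z_1$ is the first coordinate of $\tZ$, is not constant in $s$.
\item A cleaner route is to compare the two elliptical representations directly. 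Write $\tR_\tu\teps=(\tR_\tu\tOmega\tR_\tu)^{1/2}\tZ'$ with the same generator. Two elliptical laws with the same generator and positive definite scatters $\tOmega_1,\tOmega_2$ coincide only if $\tOmega_1=c\,\tOmega_2$; this is the uniqueness of the scatter up to a constant noted above, and for a fixed generator even $c=1$.
\item The case $c\neq1$ is excluded by taking determinants, since $\det(\tR_\tu\tOmega\tR_\tu)=\det\tOmega$. Alternatively, apply the constant to the one-dimensional projection: by projecting onto a direction $\mathbf t$ one gets $g(s)=g(cs)$ for all $s$, which forces $c=1$ unless $g$ is constant, and $g$ cannot be constant for nondegenerate $\tZ$.
\end{itemize}
This yields $\tR_\tu\tOmega\tR_\tu=\tOmega$, so $\tOmega$ commutes with $\tR_\tu$.

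Finally, from $\tR_\tu\tOmega=\tOmega\tR_\tu$ we get $\tR_\tu(\tOmega\tu)=\tOmega\tR_\tu\tu=\tOmega\tu$. The $+1$ eigenspace of $\tR_\tu$ is $\mathrm{span}\{\tu\}$, so $\tOmega\tu=\kappa\tu$ for some $\kappa$, and $\tu$ is an eigenvector of $\tOmega$. Equivalently, one can mirror the proof of Lemma~\ref{lemEqiv2}: the commutation gives $\tu^\top\tOmega\Gu=\bs{0}$, and the same conclusion follows.

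The main obstacle is the step from equality in law to equality of the scatter matrices without moments. Our plan there is to cite the uniqueness of the scatter matrix of an elliptical law up to a scalar, which is the standard result of Cambanis, Huang and Simons. The determinant argument above, or the scaling argument on $g$, then fixes the scalar at $1$.
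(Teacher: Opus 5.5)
Your proposal is correct and follows essentially the paper's route. Sufficiency holds because $\tu$ being an eigenvector makes $\tR_\tu$ commute with $\tOmega^{1/2}$; the paper expresses this through the sign-flipped eigenbasis $\widetilde{\tV}$. Necessity is obtained by reducing to $\tR_\tu\tOmega\tR_\tu=\tOmega$ via uniqueness of the scatter matrix, and then using that the $+1$ eigenspace of $\tR_\tu$ is $\mathrm{span}\{\tu\}$. Your moment-free uniqueness citation, the determinant argument forcing $c=1$, and the explicit nondegeneracy of $\tZ$ make rigorous a step the paper only asserts; nondegeneracy is genuinely needed, since the lemma fails for $\teps=\bs{0}$ a.s.

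Drop the opening remark that $g$ must be injective. Injectivity is neither needed nor true in general: for the uniform law on the unit sphere of $\R^3$, $g(s)=\sin\sqrt{s}/\sqrt{s}$. Your scaling argument $g(s)=g(cs)$ already gives $\tR_\tu\tOmega\tR_\tu=\tOmega$ directly, with no citation, if you apply it separately along each direction $\mathbf t$ with $c=(\mathbf t^\top\tR_\tu\tOmega\tR_\tu\mathbf t)/(\mathbf t^\top\tOmega\mathbf t)$ and use continuity of $g$ at $0$.
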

\begin{proof}
The random vector $\teps$ can be expressed as $\teps = \tOmega^{1/2} \teta$, where $\teta$ has a spherically symmetric distribution. 
Consider spectral decomposition $\tOmega = \tV \boldsymbol{\Lambda} \tV^\top$ where $\boldsymbol{\Lambda}=\mathrm{diag}\{\lambda_1,\dots,\lambda_m\}$, $\tV = (\tv_1, \dots , \tv_m)$ is orthogonal,  $\lambda_1,\dots,\lambda_m$ are the eigenvalues of $\tOmega$ and $\tv_1,\dots,\tv_m$ are corresponding eigenvectors.

Assume $\tu=\tv_i$ for some $i\in\{1,\dots,m\}$ and note that 
\begin{equation*}
(2\tv_i \tv_i^\top -\tI) \tV = (-\tv_1,\dots , -\tv_{i-1},\tv_i,-\tv_{i+1},\dots, -\tv_m) = \widetilde{\tV}
\end{equation*}
where $\widetilde{\tV}$ is also an orthogonal matrix whose columns are eigenvectors of $\tOmega$ corresponding to $\lambda_1,\dots,\lambda_m$. 
Then for $\tR_\tu = 2\tu \tu^\top - \tI = 2\tv_i \tv_i^\top -\tI$, 
\[
\Lcal(\tR_\tu\teps) = \Lcal\big((2\tv_i \tv_i^\top -\tI) \tV \boldsymbol{\Lambda}^{1/2} \tV \teta\big)=
\widetilde{\tV} \boldsymbol{\Lambda}^{1/2} \Lcal\big(\tV \teta\big) = \widetilde{\tV} \boldsymbol{\Lambda}^{1/2} \Lcal\big(\widetilde{\tV} \teta\big) = \Lcal(\tOmega^{1/2}\teta) = \Lcal(\teps)
\]
because spherical symmetry of $\Lcal(\teta)$ implies $\Lcal(\teta) = \Lcal(\tV \teta) =\Lcal(\widetilde{\tV}\teta)$. 
This proves that  $H_0(\tu)$ holds.

Now assume that $H_0(\tu)$ holds for some $\tu\in\R^m$ such that $\|\tu\|=1$ and $\tR_{\tu} = 2\tu \tu^\top - \tI$. 
Consequently,  
\[
\Lcal(\teps) = \Lcal(\tOmega^{1/2}\teta) =  \Lcal(\tR_\tu\tOmega^{1/2}\teta) = \Lcal(\tR_\tu \tOmega^{1/2} \tR_\tu \teta),
\]
because $\teta$ is spherically symmetric and $\Lcal(\teta) = \Lcal(\tR_\tu \teta)$. 
This means that both $\tOmega^{1/2}\teta$ and $\tR_\tu \tOmega^{1/2} \tR_\tu \teta$ have the same elliptical distribution with the scale matrix $\tOmega$ that must satisfy
\[
\tOmega  = [\tR_\tu \tOmega^{1/2} \tR_\tu] [\tR_\tu \tOmega^{1/2} \tR_\tu]^\top = \tR_\tu \tOmega \tR_\tu,
\]
because $\tR_\tu^\top = \tR_\tu$ and $\tR_\tu \tR_\tu = \tI$. 
Since  $\tR_\tu \tu = \tu$, we get 
\[
\tOmega \tu =  \tR_\tu \tOmega \tR_\tu \tu = \tR_\tu \tOmega \tu,
\]
 and, therefore, 
  the vector $\tOmega \tu$ lies in the eigenspace of $\tR_\tu$ corresponding to its eigenvalue $1$. 
 Since $\tR_{\tu} \tv = -\tv$ for all $\tv$ such that $\tv^\top \tu =0$, it is straightforward to see that $\tR_{\tu}$ has two different eigenvalues $\mu_1=1$ (multiplicity 1) and $\mu_2 = -1$ (multiplicity $m-1$). Consequently, 
 the eigenspace corresponding to $\mu_1=1$ is equal to the span of $\tu$, and we get  $\tOmega \tu = \kappa \tu$ for some $\kappa\in\R$. This proves that $\tu$ is the eigenvector of $\tOmega$.
\end{proof}

Lemmas \ref{lem:spherical} and \ref{lem:El} somewhat extend Lemma \ref{lemEqiv2} and complement it by proving 
the other implication.

\begin{lemma}\label{lem:exchange}
Let $\teps$ be an $m$-dimensional   random vector with $\E \teps = \boldsymbol{0}$, and let $\tv = \frac{1}{\sqrt{m}}\boldsymbol{1}_m$. 
 \begin{enumerate}[label=(\roman*)]
\item If $m=2$, then $H_0(\tv)$ holds if and only if $\teps$ is exchangeable.  
\item Let $\teps$ have an elliptical distribution with a positive definite scatter matrix $\tOmega$. If $\teps$ is exchangeable, then 
$H_0(\tv)$ holds. Moreover, $H_0(\tu)$ is satisfied  for any unit vector $\tu\in\R^m$ orthogonal to $\tv$, i.e., for any $\tu$ such that $\tu^\top \boldsymbol{1}= \boldsymbol{0}$. 
\end{enumerate}
\end{lemma}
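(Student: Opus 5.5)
Part (i) is a direct computation in dimension two; part (ii) reduces to Lemma~\ref{lem:El} once the spectral structure of a compound symmetric matrix is identified.

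\emph{Part (i).} Let $m=2$ and $\tv=\frac{1}{\sqrt2}(1,1)^\top$. Then
\[
\tR_\tv = 2\tv\tv^\top-\tI=\begin{pmatrix}0&1\\1&0\end{pmatrix},
\]
which is the only nontrivial permutation matrix $\tP$ in $\R^2$. Since $\E\teps=\bs{0}$, $H_0(\tv)$ reads $\Lcal(\teps)=\Lcal(\tR_\tv\teps)=\Lcal(\tP\teps)$. The identity permutation imposes no condition, so this is exactly exchangeability of $\teps$.

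\emph{Part (ii).} Write $\teps=\tOmega^{1/2}\teta$ with $\teta$ spherically symmetric, as in the proof of Lemma~\ref{lem:El}. The plan has three steps.

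First, show that exchangeability forces $\tOmega$ to be invariant under permutations, $\tP\tOmega\tP^\top=\tOmega$ for every permutation matrix $\tP$. The vector $\tP\teps$ is elliptical with scatter $\tP\tOmega\tP^\top$. Exchangeability gives $\Lcal(\tP\teps)=\Lcal(\teps)$, so $\tP\tOmega\tP^\top$ is proportional to $\tOmega$, because the scatter matrix of an elliptical law is unique up to a positive constant. The constant is $1$ since the traces agree. Alternatively, I would invoke the fact recalled just before Lemma~\ref{lem:spherical}: an elliptical vector is exchangeable iff its scatter matrix is compound symmetric.

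Second, read off the eigenstructure of $\tOmega$. A permutation-invariant $\tOmega$ has the form $a\tI+b\boldsymbol{1}\boldsymbol{1}^\top$. Hence $\tOmega\boldsymbol{1}=(a+mb)\boldsymbol{1}$, and $\tOmega\tu=a\tu$ for every $\tu$ with $\tu^\top\boldsymbol{1}=0$. So $\tv$ and every unit vector orthogonal to $\tv$ are eigenvectors of $\tOmega$.

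Third, apply Lemma~\ref{lem:El}, which holds here because $\E\teps=\bs{0}$ and $\tOmega$ is positive definite. It yields $H_0(\tv)$ and $H_0(\tu)$ for all such $\tu$.

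The only delicate point is the first step. The uniqueness of the scatter matrix up to a constant must be justified without finite second moments. Here I would cite the characteristic function representation $\psi(\mathbf t^\top\tOmega\mathbf t)$, or simply rely on the compound symmetry characterization stated in the text.
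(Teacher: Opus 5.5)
Your proof is correct and follows the paper's argument essentially step by step. In (i) you identify $\tR_\tv$ with the unique nontrivial $2\times 2$ permutation matrix. In (ii) you deduce $\tP\tOmega\tP^\top=\tOmega$, read off the compound-symmetric eigenstructure, and conclude via Lemma~\ref{lem:El}, adding a justification of $\tP\tOmega\tP^\top=\tOmega$ (uniqueness of the scatter matrix up to a constant, fixed by the trace) that the paper simply asserts.
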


\begin{proof}
(i)  If $m=2$, then $\tv = (1,1)^\top/\sqrt{2}$ and $\tR_{\tv} = 2\tv \tv^\top -\tI = \big(\begin{smallmatrix}
  0& 1\\
  1 & 0
\end{smallmatrix}\big)$. In dimension $m=2$, there are only two permutation matrices: the identity matrix $\tI$ and $\tR_{\tv}$. This means that 
 the definition of axial symmetry around an axis in direction $\tv$ coincides with the definition of exchangeability.

(ii) The exchangeability implies that $\tP \tOmega \tP^\top = \tOmega$. Consequently, the scatter matrix $\tOmega$ has the well-known exchangeable structure, so it is equal to a positive multiple of  the matrix $ [(1-\rho)\tI + \rho \boldsymbol{1}\boldsymbol{1}^\top]$ for some $\rho\in(- (m-1)^{-1},1)$. It is easy to check that 
both $\tv$ and any $\tu$ such that $\tu^\top \boldsymbol{1}= \boldsymbol{0}$ are eigenvectors of $\tOmega$.
The rest follows from Lemma~\ref{lem:El}. 
\end{proof}

Lemma~\ref{lem:exchange} shows that $H_0(\tv)$, $\tv= \frac{1}{\sqrt{m}}\boldsymbol{1}_m$, is equivalent to the null hypothesis of exchangeability in the bivariate case with $m=2$. The following example demonstrates that it does not hold for $m>2$.

\begin{example}
Let $\tU$ be uniformly distributed on $[-1,1]^m$. Then the coordinates of $\tU$ are uniformly distributed on $[-1,1]$ and independent. Therefore, 
$\tU$ is exchangeable. In contrast, $\tU$ is not axially symmetric around an axis in direction $\tv = \frac{1}{\sqrt{m}}\boldsymbol{1}_m$, because the random vector $\tW=\tR_{\tv} \tU$,  where $\tR_{\tv} = 2\tv \tv^\top -\tI$, is distributed uniformly on the set $\{\tx: \tx = \tR_{\tv} \ty, \ty\in[-1,1]^m\}$. The set 
differs from the original cube $[-1,1]^m$ unless $\tR_{\tv} $ is a signed permutation matrix, which happens solely for $m=2$. 
\end{example}

Lemma~\ref{lem:exchange} provides a tool for testing exchangeability by means of testing  $H_0(\tv)$ for $\tv = \frac{1}{\sqrt{m}}\boldsymbol{1}_m$. For $m=2$, if $H_0(\tv)$ is rejected, then the exchangeability is rejected as well. 
For $m>2$, this procedure still works for elliptical distributions.

\section{Proofs of the main results}\label{ap1B}

The next lemma complements the characterization of axial symmetry. It will be used bellow in the proofs.

\begin{lemma}\label{lem0}
Let $\teps$ from \eqref{RegMod1} have a continuous distribution on $\R^m$ with density $f_{\teps}$ and $\E \teps=\boldsymbol{0}$.
Assume that $H_0(\tu)$ holds, i.e., $\Lcal(\teps) = \Lcal(\tR_\tu \teps)$. 
 \begin{enumerate}[label=(\roman*)]
 \item Let $W$ and $\tU$ be defined as in \eqref{eq:WU}. Then $\Lcal\bigl((W,\tU^\top)^\top|\tX\bigr) = \Lcal\bigl((W,-\tU^\top)^\top|\tX\bigr)$.
 \item 
 Define  $Z_1=\tu^\top\teps$ and $\tZ_2=\Gu^\top\teps$ and set $\tZ = (Z_1,\tZ_2^\top)^\top = \tQ^\top\teps$. 
 Then
 the vector~$\tZ$ has a continuous distribution on $\R^m$ with density 
  $f_{\tz}(z_1,\tz_2) = f_{\teps}(z_1 \tu+\Gu\tz_2)$ and $f_{\tz}(z_1,\tz_2) = f_{\tz}(z_1,-\tz_2)$ for all $z_1\in\R$ and $\tz_2\in\R^{m-1}$.
 \item The marginal density $f_{1}$ of $Z_1$ and the conditional density $f_{1|2}(z_1|\tz_2)$ of $Z_1$ given $\tZ_2$ satisfy
\begin{equation}\label{eq:lem1Z}
\E f_{1|2}(a|\tZ_2) = f_1(a)\ \ \text{and}\ \
\E f_{1|2}(a|\tZ_2)\tZ_2 =  \boldsymbol{0} 
\end{equation}
for any constant $a\in\R$.
 \end{enumerate}
 \end{lemma}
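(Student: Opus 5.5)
Each of the three parts follows from the reflection identity $\tR_\tu = \tQ\widetilde{\tQ}^\top$ of Lemma~\ref{lemEqiv1}, combined with the independence of $\teps$ and $\tX$ in model \eqref{RegMod1}.

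For (i), write $(W,\tU^\top)^\top = (\td^\top\tX)\,\tQ^\top\teps$ and $(W,-\tU^\top)^\top = (\td^\top\tX)\,\widetilde{\tQ}^\top\teps$. Since $\teps$ is independent of $\tX$, the conditional law given $\tX=\tx$ is the law of $(\td^\top\tx)\tQ^\top\teps$, respectively $(\td^\top\tx)\widetilde{\tQ}^\top\teps$. Lemma~\ref{lemEqiv1} gives $\Lcal(\tQ^\top\teps)=\Lcal(\widetilde{\tQ}^\top\teps)$ under $H_0(\tu)$. Multiplying by the same scalar $\td^\top\tx$ preserves this equality, so the two conditional laws coincide for every $\tx$.

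For (ii), $\tZ=\tQ^\top\teps$ is an orthogonal linear transformation with unit Jacobian and inverse $\teps=\tQ\tZ = Z_1\tu+\Gu\tZ_2$. Hence $f_{\tz}(z_1,\tz_2)=f_{\teps}(z_1\tu+\Gu\tz_2)$. Under $H_0(\tu)$ we have $f_{\teps}(\tz)=f_{\teps}(\tR_\tu\tz)$; this holds almost everywhere, and we may choose a version of the density for which it holds everywhere, e.g.\ the symmetrized $\tfrac12(f+f\circ\tR_\tu)$. Since $\tR_\tu(z_1\tu+\Gu\tz_2)=z_1\tu-\Gu\tz_2$, we get $f_{\tz}(z_1,\tz_2)=f_{\tz}(z_1,-\tz_2)$.

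For (iii), let $f_2$ denote the marginal density of $\tZ_2$. Then
\[
\E f_{1|2}(a|\tZ_2)=\int f_{1|2}(a|\tz_2)f_2(\tz_2)\dd\tz_2=\int f_{\tz}(a,\tz_2)\dd\tz_2=f_1(a).
\]
Similarly, $\E f_{1|2}(a|\tZ_2)\tZ_2=\int \tz_2 f_{\tz}(a,\tz_2)\dd\tz_2$. The substitution $\tz_2\mapsto-\tz_2$, together with (ii), shows that this integral equals its own negative, so it is $\boldsymbol{0}$.

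The only delicate point is that this last integral must be finite. If it is, antisymmetry forces it to vanish, but we have to check integrability for every $a$. I would handle this as follows. Finiteness of $\E\|\teps\|$, which follows from $\E\teps=\boldsymbol{0}$ and the moment assumptions, gives $\int\|\tz_2\|f_{\tz}(a,\tz_2)\dd\tz_2<\infty$ for almost every $a$. If needed, the statement can then be read for those $a$, or alternatively the integral can be interpreted as a symmetric principal-value limit over balls $\|\tz_2\|\le R$, each of which vanishes by symmetry. I would also note that the conventions $f_{1|2}=f_{\tz}/f_2$ on $\{f_2>0\}$ and $f_{1|2}=0$ elsewhere leave the integrals unchanged.
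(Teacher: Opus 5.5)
Your proposal is correct and follows the paper's proof essentially step by step. Parts (i) and (ii) come from Lemma~\ref{lemEqiv1} (equivalently $\tR_\tu=\tQ\widetilde{\tQ}^\top$), together with the independence of $\teps$ and $\tX$ and the orthogonal change of variables $\teps=\tQ\tZ$. Part (iii) comes from integrating out $\tz_2$ and from the oddness of $\tz_2 f_{\tz}(a,\tz_2)$.

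Your extra remarks handle two points the paper passes over silently: choosing a reflection-invariant version of $f_{\teps}$, and the integrability of $\|\tz_2\|f_{\tz}(a,\tz_2)$. The paper tacitly assumes this integral is finite for every $a$, whereas your argument only guarantees it for almost every $a$, or in a principal-value sense.
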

 
 \begin{proof}
The first claim follows directly from Lemma \ref{lemEqiv1}. The second statement results from Lemma \ref{lemEqiv1} and from the transformation 
theorem given the fact that $\teps = \tQ \tZ$.
Now consider the last statement. Obviously, 
\[
\E f_{1|2}(a|\tZ_2) = \int_{\R^{m-1}} f_{1|2}(a|\tz_2) f_2(\tz_2) \dd \tz_2 = \int_{\R^{m-1}}f_{\tz}(a,\tz_2) \dd \tz_2 =  f_1(a).
\]
Furthermore, it follows from (ii) that $f_{\tz}(a, \tz_2)\tz_2$ is odd and, therefore, 
 \begin{equation*}
\E f_{1|2}(a|\tZ_2)\tZ_2 = \int_{\R^{m-1}} f_{1|2}(a|\tz_2) \tz_2 f_2(\tz_2)\dd \tz_2  = \int_{\R^{m-1}}f_{\tz}(a,\tz_2)\tz_2\dd  \tz_2=\boldsymbol{0}.
\end{equation*}
\end{proof}

Recall that the vector $\th$ by definition minimizes \eqref{optim}. The next Lemma~\ref{lemHS} originates in \cite{Hu19},
where its statement is proved using geometrical ideas in the non-regression case ($p=1$) and by appealing to an analogy in the regression 
case ($p>1$). Here we provide a detailed formal proof for the general case with $p\geq 1$ to dispel any doubt about the validity of the claims.

\begin{lemma}\label{lemHS}
Let $H_0(\tu)$ and Assumptions \ref{as:phi} and \ref{as:XY} hold and  $\td^\top \tX>0$. Then $\tct=\bs{0}$ for all $\tau\in(0,1)$ and  $\ta_{\tau,\tu}^\top\tX$ is  equal to the conditional $\tau$-quantile 
of $\tu^\top\tY$ given $\tX$,
\[
\ta_{\tau,\tu}^\top\tX = F_{\tu^\top \tY|\tX}^{-1}(\tau|\tX) = \tu^\top\tB \tX + (\td^\top\tX) F^{-1}_{\tu^\top\teps}(\tau),
\]
where $F_{\tu^\top\teps}$ is the cumulative distribution function of $\tu^\top\teps$. 
\end{lemma}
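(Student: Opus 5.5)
Since the objective in \eqref{optim} is convex in $(\ta^\top,\tc^\top)^\top$, the plan is to verify that the candidate $\ta^*$ with $\ta^{*\top}\tX=\tu^\top\tB\tX+(\td^\top\tX)F^{-1}_{\tu^\top\teps}(\tau)$, together with $\tc^*=\bs{0}$, satisfies the first-order conditions, and then to argue uniqueness. The candidate is admissible because $\tX$ has first component $1$. Concretely, we take $\ta^*=\tB^\top\tu+F^{-1}_{\tu^\top\teps}(\tau)\,\td$. With this choice the residual becomes
\[
\tu^\top\tY-\ta^{*\top}\tX=W-(\td^\top\tX)F^{-1}_{\tu^\top\teps}(\tau)=(\td^\top\tX)\bigl(Z_1-q_\tau\bigr),
\]
where $Z_1=\tu^\top\teps$ and $q_\tau=F^{-1}_{\tu^\top\teps}(\tau)$. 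This quantile is unique because $Z_1$ has a positive density on a connected support (Assumption \ref{as:XY} and Lemma \ref{lem0}(ii)). The integrability $\E\|\tY\|,\E\|\tX\|<\infty$ ensures that the objective is finite and may be differentiated under the expectation, since $\rho_\tau$ is Lipschitz and the distributions are continuous.

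The subgradient (here an actual gradient) of $\E\rho_\tau(\tu^\top\tY-\ta^\top\tX-\tc^\top\Gu^\top\tY)$ at $(\ta^*,\bs{0})$ is
\[
-\E\Bigl\{\bigl(\tau-\I[(\td^\top\tX)(Z_1-q_\tau)<0]\bigr)\,(\tX^\top,\tY^\top\Gu)^\top\Bigr\}.
\]
Since $\td^\top\tX>0$, the indicator equals $\I[Z_1<q_\tau]$, and $\psi:=\tau-\I[Z_1<q_\tau]$ depends only on $\teps$. For the $\ta$-block we use independence of $\teps$ and $\tX$ to get $\E\{\psi\tX\}=\E\psi\cdot\E\tX=\bs{0}$, because $P(Z_1<q_\tau)=\tau$. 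For the $\tc$-block we write $\Gu^\top\tY=\Gu^\top\tB\tX+(\td^\top\tX)\tZ_2$ with $\tZ_2=\Gu^\top\teps$. The first term vanishes by the same argument. The second term factorizes by independence as $\E(\td^\top\tX)\cdot\E\{\psi\tZ_2\}$. By Lemma \ref{lem0}(ii), $(Z_1,\tZ_2)$ and $(Z_1,-\tZ_2)$ have the same law, so $\E\{\psi(Z_1)\tZ_2\}=-\E\{\psi(Z_1)\tZ_2\}=\bs{0}$. One could equally integrate the identity \eqref{eq:lem1Z} over $a<q_\tau$. Hence the gradient vanishes, and $(\ta^*,\bs{0})$ is a minimizer.

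The main obstacle is uniqueness, which is needed to conclude that \emph{the} solution $\th$ equals $(\ta^*,\bs{0})$. My approach is to compute the Hessian at the minimizer,
\[
\E\Bigl\{ f_{\tu^\top\tY|\tX,\Gu^\top\tY}\bigl(\ta^{*\top}\tX\,|\,\tX,\Gu^\top\tY\bigr)\,\tV\tV^\top\Bigr\}, \qquad \tV=(\tX^\top,\tY^\top\Gu)^\top,
\]
and show that it is positive definite. The conditional density is positive on a set of positive probability, by positivity of the densities in the interior of a connected support. The second moment of $\tV$ restricted to that set is nonsingular, because $(\tY^\top,\widetilde{\tX}^\top)^\top$ has a density, so no nontrivial linear combination $\ta^\top\tX+\tc^\top\Gu^\top\tY$ vanishes on a set of positive measure. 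Strict convexity near the minimizer, together with convexity, then gives uniqueness. If it is cleaner, I would instead show directly that for any other $(\ta,\tc)$ the expected check loss strictly increases, using that $\rho_\tau$ is strictly minimized in expectation at the unique conditional quantile. The final displayed identity follows immediately from the choice of $\ta^*$.
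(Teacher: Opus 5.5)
Your main argument is correct, and its core matches the paper's. Both proofs verify first-order conditions at the conditional-quantile candidate with $\tc=\bs 0$. Both use independence of $\teps$ and $\tX$ for the $\tX$-block, and reflection symmetry for the remaining block (your sign flip $\tZ_2\mapsto-\tZ_2$, the paper's $\tR_\tu$).

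The paper, however, works in the normalized parametrization $\tb_{\tau,\tu}=\tu-\Gu\tc_{\tau,\tu}$ of \cite{Ha10a} with a Lagrange multiplier. It imports from \cite{Ha10a} both that \eqref{dLaJ}--\eqref{dLlJ} are sufficient and that the solution is unique. Your $\tc$-block equation is exactly $\Gu^\top$ applied to \eqref{B-eqv}; the $\tu$-component there only defines $\lambda_{\tau,\tu}$. So working directly with the convex objective \eqref{optim} removes the multiplier and makes sufficiency immediate.

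The genuine difference is that you prove uniqueness yourself, which makes the lemma self-contained. The Hessian route, however, needs the map $\theta=(\ta^\top,\tc^\top)^\top\mapsto\E\{\I[\tu^\top\tY<\theta^\top\tV]\tV\}$ to be differentiable at $(\ta^*,\bs 0)$. That requires some uniform domination of $f_{1|2}(\cdot\,|\,\tZ_2)\,\|\tV\|^2/(\td^\top\tX)$ near the candidate. A bounded joint density does not give a bounded conditional density, so Assumption~\ref{as:XY} does not obviously deliver this.

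A route with no second derivatives is Knight's identity $\rho_\tau(r-s)-\rho_\tau(r)=-s(\tau-\I[r<0])+\int_0^s(\I[r\le t]-\I[r\le 0])\dd t$, with $r=(\td^\top\tX)(Z_1-q_\tau)$ and $s=\boldsymbol{\delta}^\top\tV$.
\begin{itemize}
\item The linear term has mean zero by your first-order conditions.
\item The integral is nonnegative.
\item It is positive with positive probability: $\boldsymbol{\delta}^\top\tV\neq 0$ a.s.\ for $\boldsymbol{\delta}\neq\bs 0$ (your hyperplane argument), and $f_{1|2}(\cdot\,|\,\tZ_2)$ is positive on both sides of $q_\tau$ for $\tZ_2$ in a set of positive probability.
\end{itemize}

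Your fallback would not work as phrased. The point $(\ta^*,\bs 0)$ does not minimize the conditional expected loss given $(\tX,\Gu^\top\tY)$. The conditional $\tau$-quantile of $Z_1$ given $\tZ_2$ generally varies with $\tZ_2$; axial symmetry only makes it even in $\tZ_2$. The candidate is optimal only on average, so the strict increase must come from an increment identity like the one above, not from pointwise conditional optimality.
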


\begin{proof}
The conditional distribution function of $\tu^\top \tY$ given $\tX$ satisfies for any $y\in\R$ and any $\tx \in \R^p$ such that $\td^\top\tx>0$
\begin{align*}
F_{\tu^\top \tY|\tX}(y|\tx) &= \P(\tu^\top \tY\leq y |\tX=\tx)= \P(\tu^\top\tB \tx + (\td^\top \tx) \tu^\top \teps \leq y) \\
&= \P\Big((\td^\top \tx) \tu^\top \teps \leq y - \tu^\top\tB \tx\Big) = F_{\tu^\top \teps}\left((y - \tu^\top\tB \tx)/{(\td^\top \tx) }\right),
\end{align*}
where  $F_{\tu^\top\teps}$ is the cumulative distribution function  of $\tu^\top\teps$. 
Consequently, the conditional quantile $F_{\tu^\top \tY|\tX}^{-1}(\tau|\tX)$  of $\tu^\top\tY$ given $\tX$ for $\tau\in(0,1)$ is
\[
F_{\tu^\top \tY|\tX}^{-1}(\tau|\tX) =  \tu^\top\tB \tX +  (\td^\top\tX)  F^{-1}_{\tu^\top\teps}(\tau) =  \Big(\tu^\top\tB +  F^{-1}_{\tu^\top\teps}(\tau) \td^\top\Big) \tX  =\tr_{\tau,\tu}^\top\tX
\]
for $ \tr_{\tau,\tu} = \tB^\top\tu +   F^{-1}_{\tu^\top\teps}(\tau) \,\td \in \R^p$. Note that 
\begin{equation}\label{eq:f}
\P(\tu^\top\tY - \tr_{\tau,\tu}^\top\tX \geq 0|\tX) = 1-F_{\tu^\top\tY|\tX}(\tr_{\tau,\tu}^\top \tX|\tX) =  1-\tau
\end{equation}
for any $\tau\in(0,1)$.

If the joint distribution of $(\bs{Y}^\top,\bs{X}^\top)^\top$ has a finite expectation and if it is continuous on a connected support, then its directional regression quantile vector $\th$ of \cite{Ha10a} is uniquely determined by the following necessary and sufficient conditions \citep{Ha10a}: 
\begin{align}
\bs{0} &= \frac{1}{1-\tau} \E\bigl\{\I(\tb_{\tau,\tu}^\top\bs{Y}-{\ta}_{\tau,\tu}^\top\bs{X}\geq 0)\bs{X}\bigr\}  
   -\frac{1}{\tau} \E\bigl\{\I(\tb_{\tau,\tu}^\top\bs{Y}-\ta_{\tau,\tu}^\top\bs{X}< 0)\bs{X}\bigr\}\label{dLaJ},\\
\bs{D}_{\tau,\tu} &= \frac{1}{1-\tau} \E\bigl\{\I({\tb}_{\tau,\tu}^\top\bs{Y}-\ta_{\tau,\tu}^\top\bs{X}\geq 0)\bs{Y}\bigr\} 
   -\frac{1}{\tau} \E\bigl\{\I(\tb_{\tau,\tu}^\top\bs{Y}-\ta_{\tau,\tu}^\top\bs{X}< 0)\bs{Y}\bigr\}\label{dLbJ},\\
1&=\tb_{\tau,\tu}^\top\tu,\label{dLlJ}
\end{align}
where $\bs{D}_{\tau,\tu} = \lambda_{\tau,\tu}\tu/\{\tau(1-\tau)\}$ and $\lambda_{\tau,\tu}$ is the Lagrange multiplier associated with (\ref{dLlJ}). 
In fact, \eqref{dLaJ}--\eqref{dLlJ} imply $\lambda_{\tau,\tu}= \E \rho_\tau \Bigl(\tb_{\tau,\tu}^\top\tY-{\ta}_{\tau,\tu}^\top\tX\Bigr)>0$.
Here, 
$\tb_{\tau,\tu} = \tu - \Gu \tc_{\tau,\tu}$ and $\tc_{\tau,\tu}= - \Gu^\top \tb_{\tau,\tu}$. We are going to show that the conditions \eqref{dLaJ}--\eqref{dLlJ} are satisfied for $(\ta_{\tau,\tu}^\top,\tb_{\tau,\tu}^\top)^\top=(\tr_{\tau,\tu}^\top, \tu^\top)^\top$. 
 
Observe that $\bs{b}_{\tau,\tu} = \tu$ always satisfies \eqref{dLlJ}. Therefore, it suffices to show \eqref{dLaJ} and \eqref{dLbJ}. 
The condition \eqref{dLaJ} can be rewritten as 
\begin{equation}\label{A-eqv}
\bs{0} = \E\bigl\{\I(\tb_{\tau,\tu}^\top\bs{Y}-\ta_{\tau,\tu}^\top\bs{X}\geq 0)\bs{X}\bigr\} - (1-\tau) \E \bs{X}.
\end{equation}
Similarly, \eqref{dLbJ} is equivalent to 
\begin{equation}\label{B-eqv}
 \lambda_{\tau,\tu} \tu  =  \E\bigl\{\I(\tb_{\tau,\tu}^\top\bs{Y}-\ta_{\tau,\tu}^\top\bs{X}\geq 0)\bs{Y}\bigr\} - (1-\tau) \E \bs{Y}.
\end{equation}
Let $\tau\in(0,1)$.  Then \eqref{eq:f} and the law of total expectation imply 
 \begin{align*}
\E\bigl\{\I(\tu^\top\bs{Y}-{\tr}_{\tau,\tu}^\top\bs{X}\geq 0)\bs{X}\bigr\} - (1-\tau) \E \bs{X} &= \E \left\{ \P(\tu^\top\bs{Y}-{\tr}_{\tau,\tu}^\top\bs{X}\geq 0|\tX) \tX- (1-\tau)\ \tX \right\} \\
&= \E\{ 0\cdot \tX\} = \boldsymbol{0}. 
 \end{align*}
This means that $(\ta_{\tau,\tu}^\top,\tb_{\tau,\tu}^\top)^\top=(\tr_{\tau,\tu}^\top, \tu^\top)^\top$ satisfies \eqref{A-eqv}   and consequently also \eqref{dLaJ}.

Recall that model \eqref{RegMod1} leads to $\E[\tY|\tX] = \tB \tX$. This and \eqref{eq:f} result in
 \begin{align*}
\E\big\{\I(\tu^\top\bs{Y}&-{\tr_{\tau,\tu}}^\top\bs{X}\geq 0)\bs{Y}\big\} - (1-\tau) \E \bs{Y} \\
&= 
\E \left\{ \E[\I( \tu^\top\bs{Y}-{\tr_{\tau,\tu}}^\top\bs{X}\geq 0)\bs{Y}|\tX] - (1-\tau) \E[\bs{Y}|\tX] \right\}\\
&= 
\E \left\{ \E[\I( \tu^\top\bs{Y}-{\tr_{\tau,\tu}}^\top\bs{X}\geq 0)\bs{Y}|\tX] -  \P( \tu^\top\bs{Y}-\tr_{\tau,\tu}^\top\bs{X}\geq 0)|\tX)  \tB\tX \right\}\\
&= 
\E \left\{ \E[\I( \tu^\top\bs{Y}-{\tr_{\tau,\tu}}^\top\bs{X}\geq 0)\bs{Y}|\tX] -  \E[\I(\tu^\top\bs{Y}-{\tr_{\tau,\tu}}^\top\bs{X}\geq 0) \tB\tX  |\tX] \right\}\\
&=
\E \left\{ \E[\I(\tu^\top\bs{Y}-{\tr_{\tau,\tu}}^\top\bs{X}\geq 0)(\bs{Y} - \tB \tX)|\tX] \right\}\\
& = \E \left\{ \E[\I(\tu^\top\teps - F_{\tu^\top\teps}^{-1}(\tau) \geq 0) (\td^\top\tX) \teps|\tX] \right\} \\
& = \td^\top\E[\tX] \cdot   \E\left[\I(\tu^\top\teps - F_{\tu^\top\teps}^{-1}(\tau) \geq 0)\teps\right], 
\end{align*}
where we used the definition of $\tr_{\tau,\tu}$, the model structure, the assumption $\td^\top \tX>0$, and the independence of 
$\teps$ and $\tX$.  Under $H_0(\tu)$, $\mathcal{L}(\teps) = \mathcal{L} (\tR_\tu \teps)$ for $\tR_\tu = 2\tu \tu^\top - \tI$.  
Define $q= F_{\tu^\top\teps}^{-1}(\tau)$. Then
\[
\E\{  \I(\tu^\top\teps - q \geq 0) \cdot \teps\}= \E \{ \I(\tu^\top (\tR_\tu \teps) - q \geq 0)\cdot \tR_\tu \teps\} =\tR_\tu  \E\{ \I(\tu^\top\teps - q \geq 0)\cdot   \teps\},
\]
because $\tu^\top\tR_\tu = \tu^\top$. Hence, $(\tI-\tR_\tu) \E\{  \I(\tu^\top\teps - q \geq 0) \cdot \teps\} = \boldsymbol{0}$. 
Note that $\tI - \tR_\tu = 2(\tI - \tu \tu^\top)$, where  $(\tI - \tu \tu^\top)$ is the projection matrix to the space orthogonal to $\tu$. This implies 
that $\E\{  \I(\tu^\top\teps - q \geq 0) \cdot \teps\}  = \kappa \tu$ for some $\kappa\in\R$.
Moreover, $\kappa= \kappa \tu^\top \tu = \E\{  \I(\tu^\top\teps - q \geq 0) \cdot \tu^\top\teps\}>0$,
because $\E (\tu^\top\teps)=0$. This proves that \eqref{B-eqv} and consequently also 
 \eqref{dLbJ} hold for $(\tr_{\tau,\tu}^\top, \tu^\top)^\top$. 
 
To sum up, the conditions \eqref{dLaJ}--\eqref{dLlJ} are satisfied for $(\tr_{\tau,\tu}^\top, \tu^\top)^\top$, and  the uniqueness of  the directional quantiles implies that $\tb_{\tau,\tu} = \tu$ for all $\tau\in(0,1)$. Therefore, $\tc_{\tau,\tu} = - \Gu^\top\tu = \boldsymbol{0}$ for all $\tau \in(0,1)$. The rest of the assertion follows easily from the definition of $\ta_{\tau,\tu}$.
\end{proof}

\begin{lemma} \label{lem1}  
 Let the assumptions of Theorem~\ref{prop:p1} be satisfied.  
For any $\tau\in(0,1)$ define
\begin{equation}\label{eq:Vn}
\what{\tV}_n(\tau):=
 \frac{1}{\sqrt{n}}\Gu^\top\ttY^\top \left(\ttI- \ttM_{\ttX})[\what{\ta}(\tau)-(1-\tau)\mathbf{1}\right]
\end{equation}  
and
\begin{equation}\label{eq:Vn0}
  \what{\tV}_n^0(\tau)= \frac{1}{\sqrt{n}}\sum_{i=1}^n \{\tau - \I[\tu^\top\tY_i<\ta_{\tau,\tu}^\top\tX_i]\}\{\Gu^\top\tY_i-\Gu^\top \ttB \tX_i\}.
 \end{equation}
Then, for any $\tau\in(0,1)$, 
\begin{equation*}
\what{\tV}_n(\tau) = \what{\tV}_n^0(\tau) +\tR_n(\tau),
\end{equation*}
where $\|\tR_n(\tau)\|\Pto 0$ for all $\tau\in(0,1)$ and the convergence is 
uniform for $\tau\in[\varepsilon,1-\varepsilon]$.
\end{lemma}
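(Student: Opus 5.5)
The plan is to reduce $\what{\tV}_n(\tau)$ to a sum of the $\tU_i$ weighted by quantile-regression residual signs, and then to replace the estimated regression quantile by its population counterpart with a stochastic equicontinuity argument. The centring term will vanish because of the symmetry in Lemma~\ref{lem0}.

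\emph{Step 1 (exact algebraic reduction).} The constraint in \eqref{eq:DQ} gives $\ttX^\top[\what{\ta}(\tau)-(1-\tau)\mathbf{1}]=\bs{0}$. Hence $\ttM_{\ttX}[\what{\ta}(\tau)-(1-\tau)\mathbf{1}]=\bs{0}$, and the projection in \eqref{eq:Vn} drops out. For the same reason, the part $\Gu^\top\ttB\ttX^\top[\what{\ta}(\tau)-(1-\tau)\mathbf{1}]$ of $\Gu^\top\ttY^\top[\what{\ta}(\tau)-(1-\tau)\mathbf{1}]$ vanishes. This leaves
\[
\what{\tV}_n(\tau)=\frac{1}{\sqrt n}\sum_{i=1}^n \tU_i\,\{\what a_i(\tau)-(1-\tau)\}.
\]

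\emph{Step 2 (rank scores versus residual signs).} Let $\what{\bs\beta}(\tau)$ be the primal solution, i.e.\ the $\tau$-regression quantile of $\tu^\top\tY_i$ on $\tX_i$. By LP duality, $\what a_i(\tau)=\I[\tu^\top\tY_i>\tX_i^\top\what{\bs\beta}(\tau)]$ except for at most $p$ indices, where $\what a_i(\tau)\in[0,1]$. These indices are a.s.\ unique because the distribution is continuous under Assumption~\ref{as:XY}. Therefore
\[
\what{\tV}_n(\tau)=\frac{1}{\sqrt n}\sum_{i=1}^n\tU_i\{\tau-\I[\tu^\top\tY_i<\tX_i^\top\what{\bs\beta}(\tau)]\}+\tR_{n1}(\tau),
\qquad \sup_\tau\|\tR_{n1}(\tau)\|\le \frac{2p}{\sqrt n}\max_{i\le n}\|\tU_i\|.
\]
The bound on $\tR_{n1}$ is $o_P(1)$ because $\E\|\tU_1\|^{2+\delta}<\infty$, which follows from Assumption~\ref{as:XY}, and this gives $\max_i\|\tU_i\|=o_P(n^{1/(2+\delta)})$.

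\emph{Step 3 (replacing $\what{\bs\beta}$ by $\ta_{\tau,\tu}$).} By Lemma~\ref{lemHS}, $\ta_{\tau,\tu}=\tr_{\tau,\tu}$ is the true conditional quantile coefficient of $\tu^\top\tY$ given $\tX$. I would first establish
\[
\sup_{\tau\in[\varepsilon,1-\varepsilon]}\sqrt n\,\|\what{\bs\beta}(\tau)-\ta_{\tau,\tu}\|=O_P(1).
\]
This uses the standard Bahadur-type argument for linear quantile regression with i.i.d.\ random regressors. The conditional density of $W$ given $\tX$ is $(\td^\top\tX)^{-1}f_{\tu^\top\teps}(\cdot/\td^\top\tX)$, which is positive and bounded near the quantiles. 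Moreover $\E\|\tX\|^{2+\delta}<\infty$, and this yields a nonsingular Jacobian $\E[f_{W|\tX}(\cdot)\tX\tX^\top]$. Next, consider the empirical process
\[
\mathbb G_n(\bs\beta,\tau)=n^{-1/2}\sum_i\bigl(\tU_i\I[\tu^\top\tY_i<\tX_i^\top\bs\beta]-\E\,\tU\I[\tu^\top\tY<\tX^\top\bs\beta]\bigr),
\]
indexed by a VC class of half-spaces multiplied by the envelope $\|\tU\|$, which is square integrable. This process is asymptotically equicontinuous. Since $\what{\bs\beta}(\tau)\to\ta_{\tau,\tu}$ uniformly, the difference $\mathbb G_n(\what{\bs\beta}(\tau),\tau)-\mathbb G_n(\ta_{\tau,\tu},\tau)$ is $o_P(1)$ uniformly in $\tau$.

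It remains to handle the compensator
\[
\sqrt n\bigl\{\E[\tU\I(W<\tX^\top(\bs\beta-\tB^\top\tu))]\big|_{\bs\beta=\what{\bs\beta}}-\E[\tU\I(W<\tX^\top(\ta_{\tau,\tu}-\tB^\top\tu))]\bigr\}.
\]
By Lemma~\ref{lem0}(i), $(W,\tU)$ and $(W,-\tU)$ have the same conditional law given $\tX$. Hence $\E[\tU\I(W<t)\mid\tX]=\bs 0$ for every $t$, so the compensator is identically zero, not merely of small order. In particular the derivative term that usually produces a drift vanishes. Together with $\Gu^\top\tY_i-\Gu^\top\ttB\tX_i=\tU_i$, this yields the representation with $\what{\tV}^0_n(\tau)$.

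The main obstacle I expect is Step 3's uniform $\sqrt n$-consistency of $\what{\bs\beta}(\tau)$ on $[\varepsilon,1-\varepsilon]$ together with the equicontinuity for an unbounded multiplier $\tU$ and stochastic regressors. For this I would invoke bracketing or VC entropy bounds for the class $\{\tU\I[\tu^\top\tY<\tX^\top\bs\beta]\}$ with the $L^{2}$ envelope $\|\tU\|$. Uniform continuity in $\tau$ would be obtained from continuity of $F^{-1}_{\tu^\top\teps}$ on $[\varepsilon,1-\varepsilon]$, which holds because the density is positive on a connected support.
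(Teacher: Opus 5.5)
Your proof is correct, but it takes a genuinely different route from the paper.

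\textbf{The paper's route.} The paper does not derive the expansion itself. It imports from \cite{Hu19} a Bahadur-type representation, valid without $H_0(\tu)$, in which $\tU_i$ is replaced by $\Gu^\top\tY_i-\Gu^\top\ttH(\tau)\ttG(\tau)^{-1}\tX_i$. Here $\ttG(\tau)$ and $\ttH(\tau)$ are density-weighted moment matrices built from the conditional densities of $\tu^\top\tY$ given $\tX$ and given $(\tX,\Gu^\top\tY)$. It then uses the identities \eqref{eq:lem1Z} of Lemma~\ref{lem0}(iii) to show that $\Gu^\top\ttH(\tau)=\Gu^\top\ttB\ttG(\tau)$ under the null.

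\textbf{Your route.} You build the representation from scratch in three steps.
\begin{enumerate}
\item The dual constraint gives the exact identity $\what{\tV}_n(\tau)=n^{-1/2}\sum_i\tU_i\{\what a_i(\tau)-(1-\tau)\}$, so $\ttM_{\ttX}$ plays no role.
\item Complementary slackness converts rank scores into residual signs up to at most $p$ terms. This holds a.s.\ because the points $(\tX_i,\tu^\top\tY_i)$ are in general position.
\item Lemma~\ref{lem0}(i) makes the compensator $\E[\tU\,\I(W<\tX^\top\bs\gamma)]$ vanish identically in $\bs\gamma$. This is the integrated form of the second identity in \eqref{eq:lem1Z}; the paper exploits that identity only at the derivative level, through $\ttH(\tau)$.
\end{enumerate}

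\textbf{What each approach buys.} Your route is self-contained and needs weaker regularity. You need no conditional density of $\tu^\top\tY$ given $(\tX,\Gu^\top\tY)$ and no inversion of $\ttG(\tau)$. You need only two things: uniform consistency of the ordinary regression quantile of $\tu^\top\tY_i$ on $\tX_i$ over $[\varepsilon,1-\varepsilon]$, and the Donsker property of $\{\tU\,\I[W<\tX^\top\bs\gamma]\}$. In particular, the uniform $\sqrt n$-rate you announce at the start of Step~3 is never used and can be dropped. Dropping it also spares you from checking that $\E[f_{W|\tX}(\cdot)\tX\tX^\top]$, which carries the factor $(\td^\top\tX)^{-1}$, is finite and invertible. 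The paper's route, in exchange, rests on an expansion that also holds off the null, which is what one would need to analyse power or local alternatives.
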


\begin{proof}
Under the stated conditions, \cite{Hu19} showed that 
\begin{equation*}
\what{\tV}_n(\tau)  =  \what{\tV}_n^1(\tau) + \tR_n(\tau),
\end{equation*}
where  $\tR_n(\tau)$ has the properties stated above and  
\begin{equation*}
 \what{\tV}_n^1(\tau) = \frac{1}{\sqrt{n}}\sum_{i=1}^n \left\{\tau - \I[\tu^\top\tY_i<\ta_{\tau,\tu}^\top\tX_i]\right\}\left\{\Gu^\top\tY_i-\Gu^\top \ttH(\tau)\ttG(\tau)^{-1} \tX_i\right\}
\end{equation*}
for matrices
\begin{equation*}
\ttG(\tau)= \E\{ \fuX(\ta_{\tau,\tu}^\top\tX|\tX)\tX\tX^\top\}\quad \text{and} \quad \ttH(\tau)=\E\{\fuXG(\ta_{\tau,\tu}^\top\tX|\tX, \Gu^\top\tY)\tY\tX^\top\},
\end{equation*}
where $\fuX$ is the density of $\tu^\top\tY$ given $\tX$, and $\fuXG$ is the density of $\tu^\top\tY$ given $(\tX^\top,\tY^\top\Gu)^\top$. 
Consequently, it suffices to show that  
\begin{equation*}
\Gu^\top\ttH(\tau)\ttG(\tau)^{-1} =\Gu^\top\ttB,\ \ \text{i.e.,}\ \ \Gu^\top\ttH(\tau) =\Gu^\top\ttB\ttG(\tau). 
\end{equation*}

Define $\tZ=(Z_1,\tZ_2^\top)^\top$, $f_1$ and $f_{1|2}$ as in Lemma~\ref{lem0}.  Then it follows from the considered  regression model \eqref{RegMod1} that
\begin{align*}
\fuX(\ta_{\tau,\tu}^\top\tX|\tX) &= f_1\left(\frac{\ta_{\tau,\tu}^\top\tX-\tu^\top \ttB\tX}{\td^\top\tX}\right)\cdot \frac{1}{(\td^\top\tX)}\ \ \text{and}\\
\fuXG(\ta_{\tau,\tu}^\top\tX|\tX, \Gu^\top\tY) &= f_{1|2}\left(\frac{\ta_{\tau,\tu}^\top\tX-\tu^\top\ttB\tX}{\td^\top\tX} \Big| \Gu^\top\teps \right)\cdot \frac{1}{(\td^\top\tX)}.
\end{align*}
Therefore,
\[
\ttG(\tau)=\E f_1\left(\frac{\ta_{\tau,\tu}^\top\tX-\tu^\top\ttB\tX}{\td^\top\tX}\right)\cdot \frac{1}{(\td^\top\tX)} \tX\tX^\top
\]
and
\[
\ttH(\tau)=  \underbrace{\E\{\fuXG(\ta_{\tau,\tu}^\top\tX|\tX, \Gu^\top\tY)\ttB\tX\tX^\top\} }_{\boldsymbol{I}_1}+\underbrace{\E\{\fuXG(\ta_{\tau,\tu}^\top\tX|\tX, \Gu^\top\tY)(\td^\top\tX)\teps\tX^\top\}}_{\boldsymbol{I}_2}.
\]
It follows from \eqref{eq:lem1Z} in Lemma~\ref{lem0} that
\begin{align*}
\boldsymbol{I}_1 & = \E  f_{1|2}\left(\frac{\ta_{\tau,\tu}^\top\tX-\tu^\top\ttB\tX}{\td^\top\tX} \Big| \Gu^\top\teps \right)\cdot \frac{1}{(\td^\top\tX)}\ttB\tX\tX^\top\\ 
& = \E \left\{\E\left[f_{1|2}\left(\frac{\ta_{\tau,\tu}^\top\tX-\tu^\top\ttB\tX}{\td^\top\tX} \Big| \Gu^\top\teps \right)|\tX\right] \frac{1}{(\td^\top\tX)} \ttB\tX\tX^\top \right\}\\
& =  \ttB \E\left\{ f_1\left(\frac{\ta_{\tau,\tu}^\top\tX-\tu^\top\ttB\tX}{\td^\top\tX} \right)  \frac{1}{(\td^\top\tX)} \tX\tX^\top\right\} = \ttB \ttG(\tau).
\end{align*}
Furthermore,
\[
\boldsymbol{I}_2  =  \E  f_{1|2}\left(\frac{\ta_{\tau,\tu}^\top\tX-\tu^\top\ttB\tX}{\td^\top\tX} \Big| \Gu^\top\teps \right) \teps\tX^\top
\]
and
\[
\Gu^\top \boldsymbol{I}_2  = \E\left\{ \E\left[ f_{1|2}\left(\frac{\ta_{\tau,\tu}^\top\tX-\tu^\top\ttB\tX}{\td^\top\tX} \Big| \Gu^\top\teps \right)\Gu^\top\teps \Big|\tX\right] \tX^\top\right\}=\boldsymbol{0}
\]
due to \eqref{eq:lem1Z}. 
To sum up,
\[
\Gu^\top \ttH(\tau) = \Gu^\top \ttB \ttG(\tau) + \boldsymbol{0},
\]
which concludes the proof.
\end{proof}
\medskip

\noindent \emph{Proof of Theorem \ref{prop:p1}.}
Since the first column of $\ttX$ is $\mathbf{1}$ and  $\ttM_{\ttX}$ is the projection matrix to the space generated by the columns of $\ttX$, it holds that $\ttM_{\ttX} \mathbf{1} = \mathbf{1}$ and, consequently,
\begin{equation}\label{eq:Mx}
 (\ttI-\ttM_{\ttX})\mathbf{1}=\mathbf{0}\ \ \text{and}\ \ (\ttI - \ttM_{\ttX}) \ah(\tau)=\ah(\tau)- (1-\tau)\mathbf{1}.
\end{equation}

Therefore,  $\what{\tV}_n$ from \eqref{eq:Vn} and the definition of $\widehat{\tb}$ imply
\begin{align*}
- \int_{0}^1 \phi(\tau) \dd  \what{\tV}_n(\tau) & = \frac{1}{\sqrt{n}}\Gu^\top\ttY^\top (\ttI_n- \ttM_{\ttX}) \left[ -\int_{0}^1\phi(\tau) \dd \what{\ta}(\tau) - \int_0^1  \phi(\tau) \dd \tau\right]\\ 
& = \frac{1}{\sqrt{n}}\Gu^\top\ttY^\top (\ttI_n- \ttM_{\ttX}) \bh = \what{\tS}_n.
\end{align*}

Lemma~\ref{lem1}  proves that  $\what{\tV}_n(\tau)  =  \what{\tV}_n^0(\tau) + \tR_n(\tau)$ for $\what{\tV}_n^0$ from \eqref{eq:Vn0}, where
$\|\tR_n(\tau)\|\Pto 0$ for all $\tau\in(0,1)$ and the convergence is uniform on $[\varepsilon,1-\varepsilon]$.
Furthermore, $\ta_{\tau,\tu}^\top\tX$ is the conditional quantile $F_{\tu^\top \tY|\tX}^{-1}(\tau)$ according to Lemma~\ref{lemHS}. Consequently,
\begin{align*}
\I\left[\tu^\top\tY_i<\ta_{\tau,\tu}^\top\tX_i\right]  &= \I\left[\tu^\top\tY_i<F_{\tu^\top \tY|\tX}^{-1}(\tau|\tX_i)\right] \\
&= 
 \I\left[\tu^\top\tB \tX_i + (\td^\top \tX_i) \tu^\top\teps_i< \tu^\top\tB \tX_i + (\td^\top \tX_i) F^{-1}_{\tu^\top\teps} (\tau) \right] \\
&=
\I\left[F_{\tu^\top \teps}(\tu^\top\teps_i)<\tau\right]
\end{align*}
almost surely for all $i=1,\dots,n$.

It follows from  Assumption \ref{as:phi} that $ \int_{0}^1 \phi(\tau) \dd \tR_n(\tau) \Pto \boldsymbol{0}$ as $n\to\infty$. 
Consequently, the test statistic $\what{\tS}_n$ can be expressed as
\begin{align*}
\what{\tS}_n & = \int_{0}^1 -\phi(\tau) \dd \what{\tV}_n(\tau) = \int_{0}^1 -\phi(\tau) \dd \what{\tV}_n^0(\tau) + o_{\P}(1)\\
& = \frac{1}{\sqrt{n}}\sum_{i=1}^n(\Gu^\top\tY_i-\Gu^\top \ttB \tX_i) \left[ \int_0^1 -\phi(\tau) \dd \tau +\int_0^1 \phi(\tau) \dd  \I[F_{\tu^\top\teps}(\tu^\top\teps_i)< \tau]  \right] +o_{\P}(1)\\
&  = \frac{1}{\sqrt{n}}\sum_{i=1}^n\tU_i  \left[\phi\left(F_{\tu^\top\teps}(\tu^\top\teps_i)\right) - \overline{\phi}\right] +o_{\P}(1) = \frac{1}{\sqrt{n}}\sum_{i=1}^n \txi_i +o_{\P}(1)
\end{align*} 
for independent and identically distributed random vectors 
$\txi_i:=\tU_i  \left[\phi\left({F_{\tu^\top\teps}(\tu^\top\teps_i)}\right) - \overline{\phi}\right]$, $i=1,\dots,n$. Furthermore, $F_{\tu^\top\teps}(\tu^\top\teps_i) = F_{W|X}(W_i|\tX_i)$, where $F_{W|X}(\cdot |\tx)$ is the cdf of $W$ given $\tX=\tx$.
It follows from {Lemma~\ref{lem0} (i) that 
\begin{align*}
\E \txi_i &= \E \tU_i \left[ \phi({F_{\tu^\top\teps}(\tu^\top\teps_i)})-\overline{\phi}\right] = \E\left\{ \E \left(\tU_i \left[ \phi(F_{W|\tX}(W_i|\tX_i))-\overline{\phi}\right]|\tX_i\right)\right\} \\
&= \E\left\{ \E \left((-\tU_i) \left[ \phi(F_{W|\tX}(W_i|\tX_i))-\overline{\phi}\right]|\tX_i\right)\right\} = -\E\txi_i, 
\end{align*}
which implies $\E \txi_i=\bs{0}$. 
Moreover, it follows from Assumptions \ref{as:phi} and \ref{as:XY} that
\begin{equation*}
\var \txi_i = \E \txi_i \txi_i^\top = \E \tU_i\tU_i^\top \left[ \phi(F_{\tu^\top\teps}(\tu^\top\teps_i))-\overline{\phi}\right]^2
\end{equation*}
is finite, and, therefore, the  assertion follows directly from the central limit theorem.
\qed

\noindent \emph{Proof of Theorem \ref{th2}.}
If $\phi$ is the sign score function, then $\overline{\phi} = 0$ and $[\phi(t)]^2=1$ for all $t\in[0,1] \smallsetminus \{0.5\}$}. Therefore, {$\sigma^2_{\phi}=1$ and $\tSigma = \E \tU_1\tU_1^\top$. 

If $\tU_1$ and $\tu^\top\teps_1$ are conditionally independent given $\tX_1$, then
\begin{align*}
\tSigma &= \E \left\{ \E\left[\tU_1\tU_1^\top \left[ \phi\big(F_{ \tu^\top \teps}(\tu^\top \teps_1)\big)-\overline{\phi}\right]^2\Big|\tX_1\right]\right\}\\
&=  \E \left\{ \E\left[\tU_1\tU_1^\top|\tX_1\right]\cdot \E\left[ \left[ \phi\big(F_{ \tu^\top \teps}(\tu^\top \teps_1)\big)-\overline{\phi}\right]^2\Big|\tX_1\right]\right\}\\
&=\left( \E  \tU_1 \tU_1^\top \right)\cdot  \E \left[ \phi(F_{ \tu^\top \teps}(\tu^\top \teps_1))-\overline{\phi}\right]^2 = \sigma^2_{\phi}  \E  \tU_1 \tU_1^\top,
\end{align*}
due to the independence of $\tX_1$ and $\teps_1$. \qed

\section{Additional results to the practical part}\label{ap2}

Figures \ref{fig1}--\ref{fig3} show the empirical power for the test of $H_0(\tu_{\alpha})$ for the vectors $\tu_{\alpha} = (\cos(\alpha),\sin(\alpha),\bs{0}_{m-2}^\top)^\top$,  $\alpha\in[0,\pi/12]$, obtained for models A--D considered in the Monte Carlo simulation study in Section~\ref{Demo}. 

\smallskip

In Section~\ref{Real}, we consider model \eqref{RegMod1} for $m=2$ and $p=2$ and $\tX = (1, G)^\top$, where $G$ is the gender indicator ($G=1$ for men) for $n$ independent realizations. We set $\td = (1,d)^\top$,  $\tB = (b_{ij})_{i,j=1}^2$, $\tY=(Y_1,Y_2)^\top$ and $\teps=(\eps_1,\eps_2)^\top$, so the generic model can be rewritten as
\begin{align*}
Y_{1} & = b_{11} + b_{12}G+ (1+d G)\eps_{1},\\
Y_{2}& = b_{21}+b_{22}G + (1+d G)\eps_{2}.
\end{align*}
Consequently, the parameter $d$ satisfies
\[
(1+d)^2 = \frac{\var[Y_{1}|G=1]}{\var[Y_{1}|G=0]} =\frac{\var[Y_{2}|G=1]}{\var[Y_{2}|G=0]}.
\]
At the same time,  we assume that $\td^\top \tX = 1+d \cdot G >0$, so $d>0$. 
Denote as $v_j = \var[Y_{j}|G=1]/\var[Y_{j}|G=0]$ the variance ratio for males and females for the $j$-th component of $\tY$, $j=1,2$. 
If $\widehat{v}_1$ is a consistent estimator of $v_1$, then a consistent estimator of $d$ is obtained  as
\begin{equation}\label{eq:d}
\widehat{d}_1= \sqrt{\widehat{v}_1}-1.
\end{equation}
At the same time, an analogous consistent estimator $\widehat{d}_{2}$ of $d$ can be obtained from a  consistent estimator $\widehat{v}_2$ of $v_2$. We then define our estimator of $d$, that combines information from both components of $\tY$, as
$
\widehat{d} = \frac{1}{2}(\widehat{d}_{1} + \widehat{d}_{2}).
$

The estimator $\widehat{v}_j$ can be obtained easily as a ratio of the sample variances of the $j$-th component of the OLS residuals. Let $\boldsymbol{e}_i = (e_{i,1},e_{i,2})^\top = \tY_i - \widehat{\tB} \tX_i$ be the OLS residual for the $i$-th subject, $i=1,\dots,n$.  Let $S^2_{M,j}$ and $S^2_{F,j}$ be the sample variances of  those $e_{1,j},\dots,e_{n,j}$ corresponding to males ($G=1$) and females ($G=0$), respectively, $j=1,2$. Then $\widehat{v}_j = S^2_{M,j}/ S^2_{F,j}$. Mathematically, 
let $K = \sum_{i=1}^n G_i$ be the number of males and assume that $G_1=\dots=G_K=1$ and $G_{K+1}=\dots=G_n = 0$ without loss of generality. Then for $j=1,2$, 
\begin{align*}
S^2_{M,j} &= \frac{1}{K-1}\sum_{i=1}^K \left[ e_{i,j} - \frac{1}{K}\sum_{r=1}^K e_{r,j}\right]^2,\\
 S^2_{F,j}&= \frac{1}{n-K-1}\sum_{i=K+1}^n \left[ e_{i,j} - \frac{1}{n-K}\sum_{r=K+1}^n e_{r,j}\right]^2.
\end{align*}

Alternatively, under the null hypothesis, $\teps$ is exchangeable, so $\var \eps_1 = \var \eps_2$ and $\var[Y_1|G] = \var[Y_2|G]$. Consequently,  
we can define an estimator $\widetilde{d}$ of $d$ as in \eqref{eq:d}, but with $\widehat{v}_1$ replaced by 
 a pooled estimator  $\widehat{v} = S^2_{M}/S^2_{F}$, where  $S^2_{M}$ and $S^2_{F}$ are sample variances 
  of all  OLS residuals  corresponding to males and females, respectively.  Here, $S^2_{M}$ is computed as the sample variance of $e_{1,1},\dots,e_{K,1},e_{1,2},\dots,e_{K,2}$ and similarly for $S^2_F$. 
  Mathematically, 
\begin{align*}
S^2_{M} &= \frac{1}{2K-1}\sum_{j=1}^2\sum_{i=1}^K\left[ e_{i,j} - \frac{1}{2K}\sum_{h=1}^2 \sum_{r=1}^K e_{r,h}\right]^2,\\
 S^2_{F}&= \frac{1}{2(n-K)-1}\sum_{j=1}^2\sum_{i=K+1}^n \left[ e_{i,j} - \frac{1}{2(n-K)}\sum_{h=1}^2 \sum_{r=K+1}^n e_{rh}\right]^2.
\end{align*}

\medskip

Figure~\ref{fig:data} shows the  standardized residuals $\widehat{\eps}_{i,j}= e_{i,j} / (1+\widehat{d} \cdot G_i)$, $i=1,\dots,n$, $j=1,2$,
computed from the regression model for transformed expenditures considered in Section~\ref{Real}. The left panel corresponds to  food and goods, where the test does not reject the null hypothesis of 
the symmetry around axis in direction $(1,1)^\top$, i.e.
exchangeability.   The right panel provides residuals for goods and services, where the same hypothesis is rejected with p-value less than $0.001$.  

\begin{figure}[htbp]
\begin{center}
 \includegraphics[width=\textwidth]{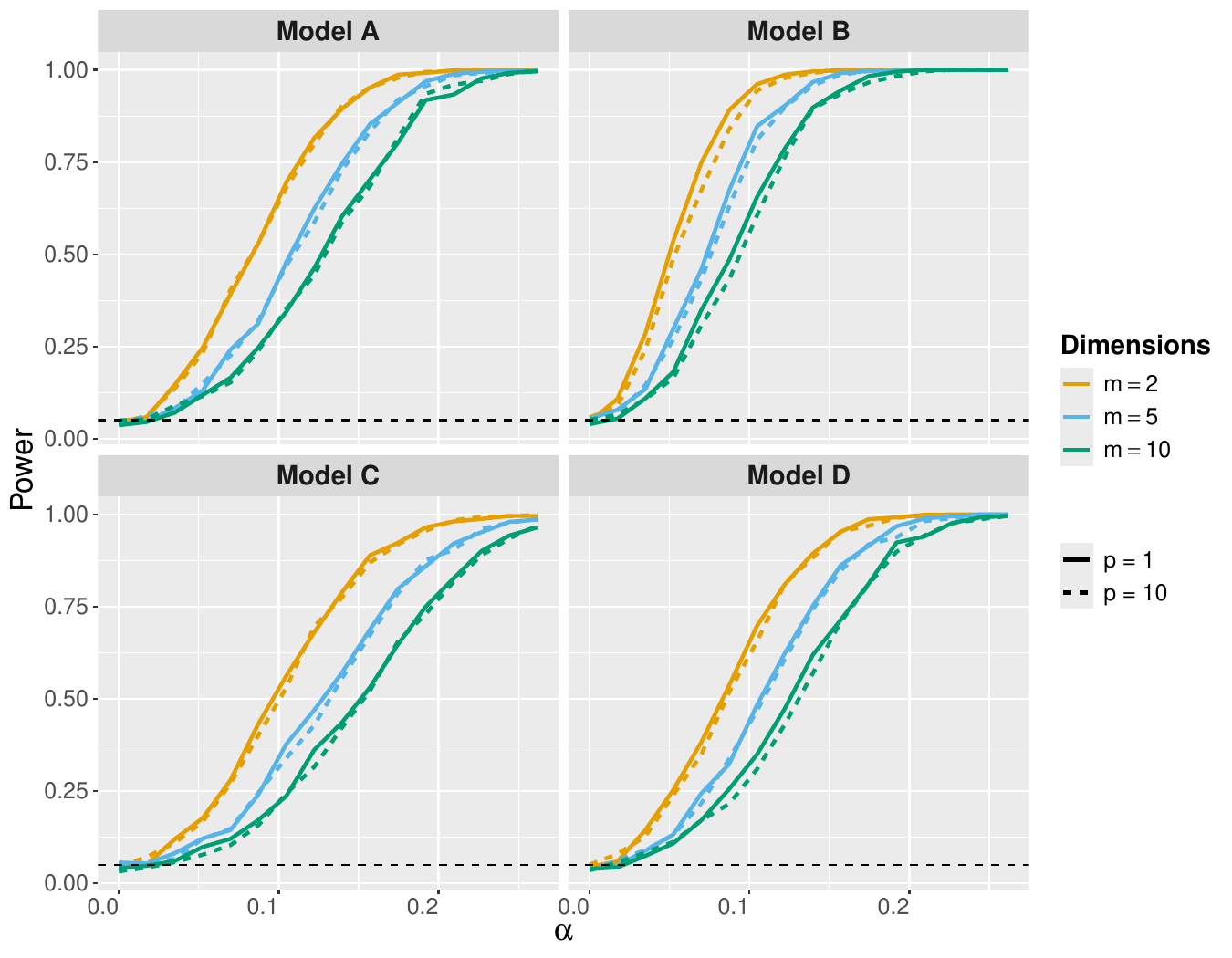}
 \end{center}
\caption{{\textbf{Power in the location and regression case}}. Empirical power of the test based on $T_n$ with normal scores for testing $H_0(\tu_{\alpha})$, $\tu_{\alpha} = (\cos(\alpha),\sin(\alpha),\bs{0}_{m-2}^\top)^\top$ for various values $\alpha\in[0,\pi/12]$, 
response dimensions $m\in\{2,5,10\}$, regressor dimensions $p\in\{1,10\}$, sample size $n=1\,000$ and models A--D. The test level $0.05$ is stressed with the dashed black line.}\label{fig1}
\end{figure}

\begin{figure}[htbp]
\begin{center}
\begin{center}
 \includegraphics[width=\textwidth]{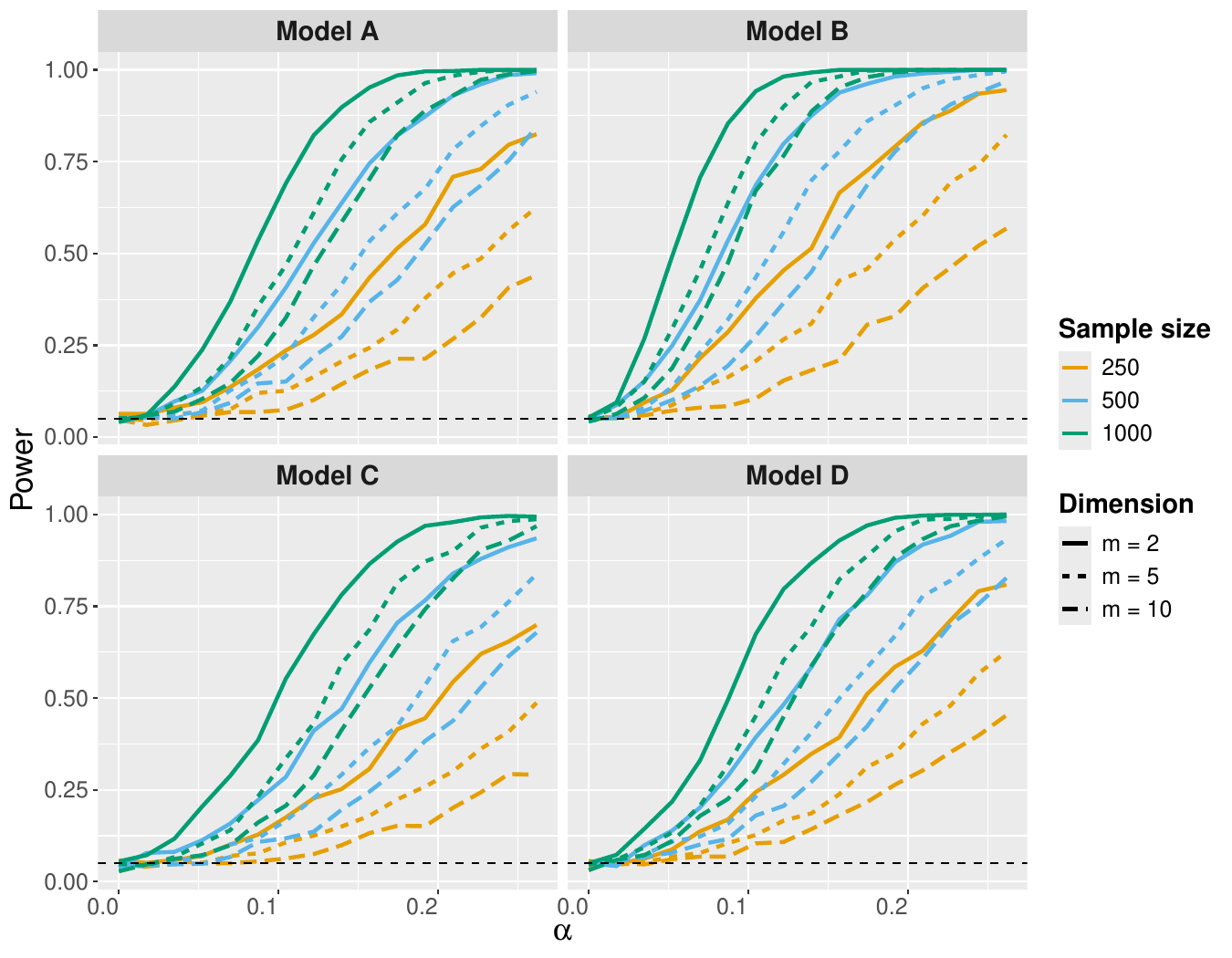}
 \end{center}
\end{center}
\caption{\textbf{Power for various sample sizes.} Empirical power of the test based on $T_n$ with normal scores for testing $H_0(\tu_{\alpha})$, $\tu_{\alpha}= (\cos(\alpha),\sin(\alpha),\bs{0}_{m-2}^\top)^\top$ for various values $\alpha\in[0,\pi/12]$, sample sizes $n\in\{250,500,1\,000\}$, 
response dimensions $m\in\{2,5,10\}$, regressor dimension $p=5$ and models A--D. The test level $0.05$ is stressed with the dashed black line. }\label{fig2}
\end{figure}

\begin{figure}[htbp]
\begin{center}
\begin{center}
 \includegraphics[width=\textwidth]{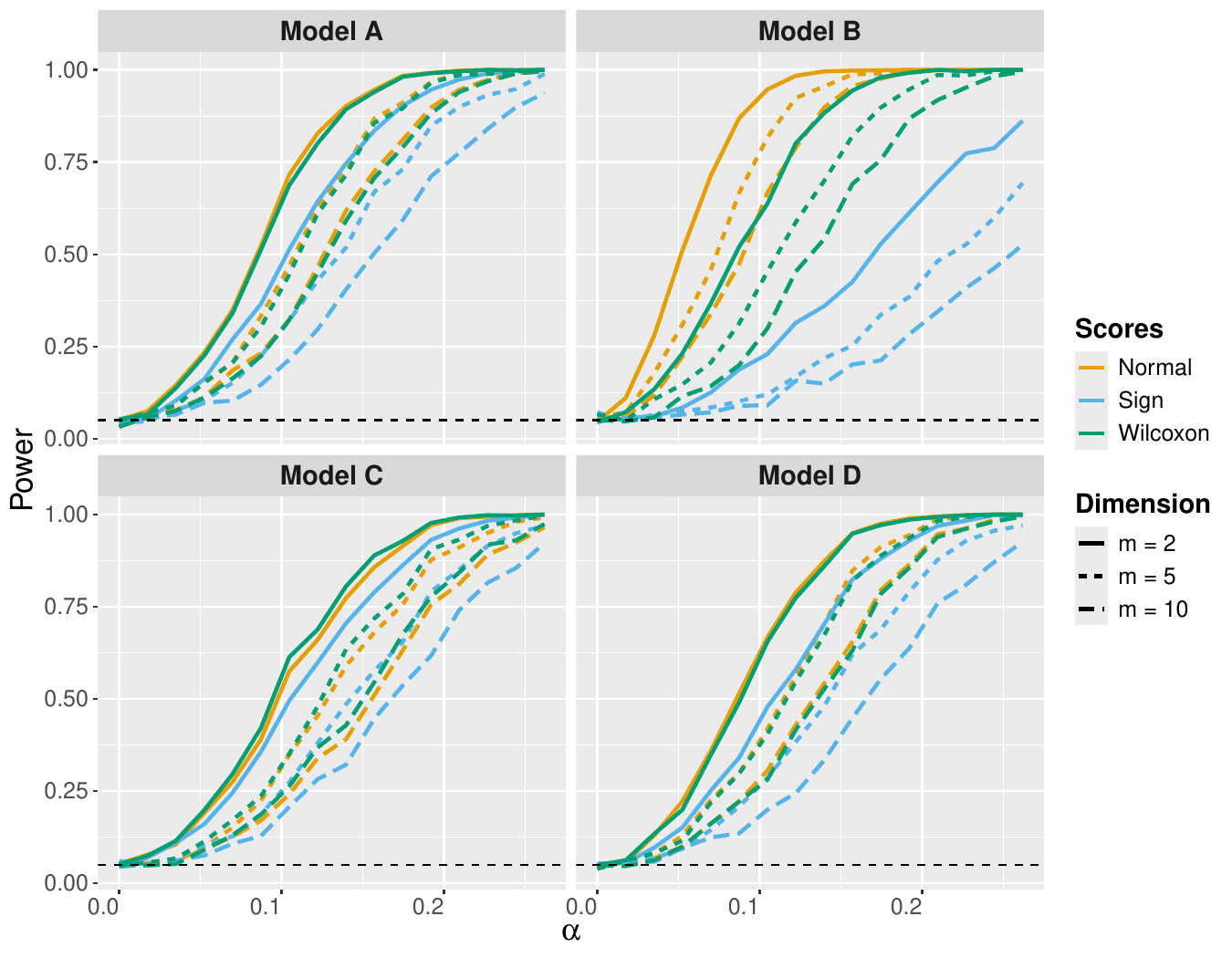}
 \end{center}
\end{center}
\caption{\textbf{Power for various scores.} Empirical power of the test based on $T_n$ for testing $H_0(\tu_{\alpha})$, $\tu_{\alpha}= (\cos(\alpha),\sin(\alpha),\bs{0}_{m-2}^\top)^\top$ for various values $\alpha\in[0,\pi/12]$, three different score functions, response dimensions $m\in\{2,5,10\}$, regressor dimension $p=3$, sample size $n=1\,000$ and models A--D.  The test level $0.05$ is stressed with the dashed black line. }\label{fig3}
\end{figure}

\begin{figure}[htbp]
\begin{center}
\begin{center}
 \includegraphics[width=\textwidth]{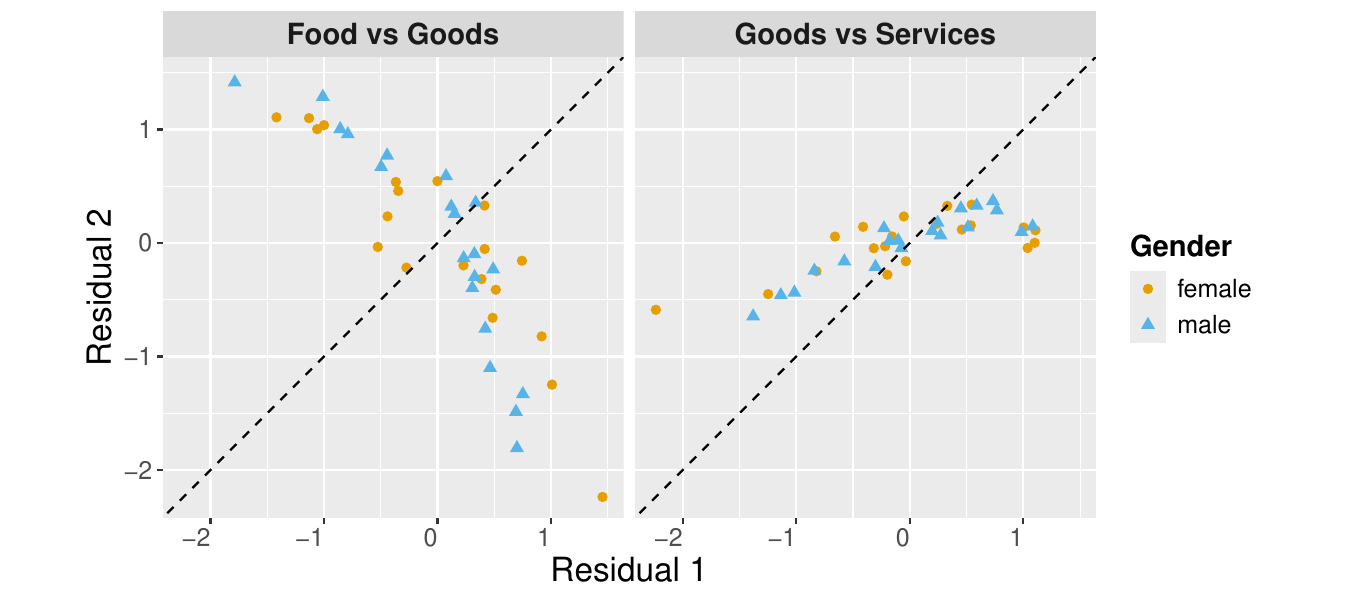}
 \end{center}
\end{center}
\caption{Residuals from the regression model for log-tranformed expenditures on food and goods (left panel) or goods and services (right panel). The straight dashed line is the axis in direction $\tu_e = \frac{1}{\sqrt{2}}(1,1)^\top$.}\label{fig:data}
\end{figure}

\end{document}